\theoremstyle{plain}
\newtheorem{thm}{Theorem}
\newtheorem*{thm*}{thm}
\newtheorem*{lem*}{Lemma}
\newtheorem*{prop*}{Proposition}
\newtheorem*{cor*}{Corollary}
\newtheorem*{conj*}{Conjecture}
\theoremstyle{definition}
\newtheorem{defn}[thm]{Definition}
\title{The boundary of Rauzy fractal and discrete tilings}
\author[W. Choi]{Woojin Choi$^1$}
\author[H. Kang]{Hyosang Kang$^1$}
\author[J. Lee]{Jeonghoon Lee$^2$}
\author[Y. Oh]{Youchan Oh$^3$}
\address{$^1$Daegu Gyeongbuk Institute of Science and Technology (DGIST), Daegu 42988, South Korea.}
\address{$^2$Gyeonggi Science High School (GSHS), Gyeonggi-do 16297, South Korea.}
\address{$^3$Seoul Science High School (SSHS), Seoul 03066, South Korea.}
\address{$^\ast$ Corresponding author}
\email{hyosang@dgist.ac.kr}
\begin{document}

\maketitle 
\begin{abstract}
The Rauzy fractal is a domain in the two-dimensional plane constructed 
by the Rauzy substitution, a substitution rule on three letters.
The Rauzy fractal has a fractal-like boundary, 
and the currently known its constructions is not only for its boundary 
but also for the entire domain. 
In this paper,
we show that all points in the Rauzy fractal have a layered structure.
We propose two methods of constructing the Rauzy fractal using layered structures.
We show how such layered structures can be used to construct 
the boundary of the Rauzy fractal with less computation than conventional methods.
There is a self-replicating pattern in one of the layered structure in the Rauzy fractal.
We introduce a notion of self-replicating word
and visualize how some self-replicating words on three letters
creates discrete tiling of the two dimensional plane.
\end{abstract}

\smallskip
\noindent
\textbf{Key words.} Rauzy fractal, fractal boundary, discrete tiling

\
\section{Introduction}

In \cite{Ra82}, Rauzy proposed a method of constructing 
a compact region called the \textbf{Rauzy fractal} (Figure \ref{fig:rauzy-three-colors}).
There are two characteristics in the Rauzy fractal.
One is that the Rauzy fractal has a fractal-like boundary (as its name indicates),
and another is that it discretely tiles the two-dimensional plane (Figure \ref{fig:rauzy-tiling}).
The tiling characteristic of the Rauzy fractal generalizes to the Pisot conjecture,
which states that every Pisot substitution ($\S\ref{sec:rauzy-fractal}$) 
on $d$ letters gives a discrete tiling of the $d$-dimensional space $\mathbb R^d$ \cite{AI00, Ba18, BS07, RA04, So97}.
Pisot substitutions are also studied in relationship with Pisot numbers \cite{Pi38,To20}.
In symbolic dynamics, Pisot substitution plays one of key role 
in understanding the substitutive systems \cite{Ak15, Qu10}. 
Pisot conjecture further generalizes to the pure discrete spectrum conjecture
which states that every dynamical system defined by a Pisot substitution on $d$ letters 
has a pure discrete spectrum, which has been proved only for $d\le 2$ \cite{BM02,BK06,Si04,SS02}.

\begin{figure}
\centering
\begin{subfigure}[t]{0.45\textwidth}
\centering
    \includegraphics[scale=.3]{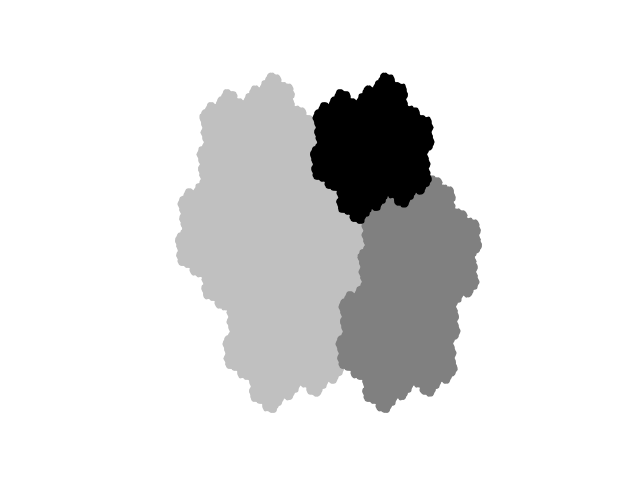}
    \caption{Rauzy fractal in three colors.}
    \label{fig:rauzy-three-colors}
\end{subfigure}
\begin{subfigure}[t]{0.45\textwidth}
\centering
	\includegraphics[scale=.27]{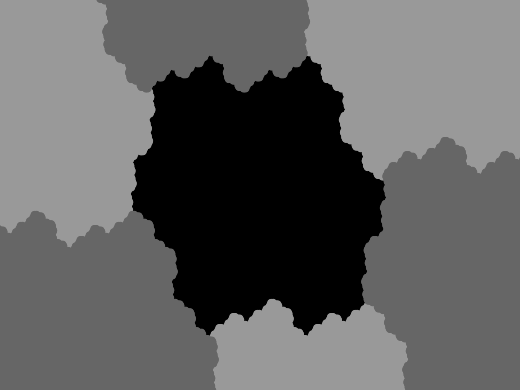}
	\caption{Tiling of the Rauzy fractal}
	\label{fig:rauzy-tiling}
\end{subfigure}
\caption{The Rauzy fractal and its tiling}
\label{fig:rauzy}
\end{figure}

There are two ways known to construct Rauzy fractal. 
One uses a convergent sequence of $3$-dimensional points \cite{Ra82},
and the other uses the exductive method \cite{AI00}.
Both methods construct the entire Rauzy fractal,
and increase the resolution of the Rauzy fractal with larger computations
for its boundary and interior at the same time.
Since the Rauzy fractal is a simply connected domain, 
so the higher resolution is only visible on its boundary. 
So it is questionable whether we can effectively increase the resolution of the boundary
of the Rauzy fractal only.

Our main goal is to find an efficient algorithm for drawing the boundary of the Rauzy fractal.
We found two new construction methods for the Rauzy fractal.
Both methods rely on the fact that Rauzy fractal is the union of ``layered" points.
The first method, the construction A ($\S\ref{sec:rauzy-const-1}$)
uses Theorem \ref{thm:main-1}, which states that the union of all A-layers 
(Equations \eqref{eqn:vi-layer-1} - \eqref{eqn:vi-layer-5}) is the Rauzy fractal.
To define the points in the A-layers, we use the tribonacci words (c.f. Equation \eqref{eqn:an-recursive}).
Each A-layer has its \textit{level}, and the points in the A-layers at the same level
cluster into hexagons, called \textit{cells}.
Thus we can visualize the boundary of the Rauzy fractal with
the boundary cells (c.f. Figures \ref{fig:rauzy-boundary}).

The second method, the construction B ($\S\ref{sec_alternative_construction}$)
uses Theorem \ref{thm:main-2}, which states that the union of all B-layers is the Rauzy fractal.
The B-layers are defined by a pseudo-\textit{self-replicating} pattern 
in the sequence of $0102010$ (Equations \eqref{eq-sn-3} - \eqref{eq-sn-9}).
The points in the B-layers are determined only by this pattern, 
whereas the points in the A-layers requires extra information on tribonacci words.
The word ``pseudo" comes from the exception at the number $2$ in the sequence $0102010$,
which we must have in order to create the exact Rauzy fractal.
Interestingly, we still get compact domains when we use fullly self-replicating patterns of various sequence.
In $\S\ref{sec_discrete_tiling_plane}$, we will visualize certain patterns that 
create compact domains which tiles $\mathbb R^2$ discretely (Figures \ref{fig-w-self}, \ref{fig:0120_tiling}).

\section{Pisot substitutions and their domains} \label{sec:rauzy-fractal}

Let the \textbf{letters} be the integers $0, 1, 2, \cdots$,
and a \textbf{word} be a string of letters. 
Given a sequence $\mathbf w = (w_0,\cdots, w_l)$ of integers, 
we denote a word $[\mathbf w]$ as
\begin{equation}\label{eqn:w-word}
	[\mathbf w] = [w_0w_1\cdots w_l].
\end{equation}
We call each $w_i$ a \textbf{character}.
A \textbf{word on $d$ letters} is a word whose characters are in the $d$ letters from $0$ to $d-1$.
The \textbf{empty word} is the word without any character, and denoted by $[\,]$. 
The \textbf{length} of the word is the number of its characters.
For example, $L([\,]) =0$ and $L([w_0\cdots w_l]) = l+1$.
Given a word $[\mathbf w]$ on $d$ letters, the \textbf{word vector} for $[w]$
is the $d$-dimensional (integral) vector whose $(i+1)$-th entry ($i=0,\cdots, d-1$) 
is the number of the letter $i$ in the characters of $[\mathbf w]$.
The following shows few examples of words $[\mathbf w_n]$ and their word vectors $\mathbf v_n$
\begin{alignat}{2}
    [\mathbf w_0] &= [0] , &&\mathbf v_0 = (1,0,0),\label{eqn:an-1} \\
    [\mathbf w_1] &= [01] , &&\mathbf v_1 = (1,1,0),\label{eqn:an-2}\\
    [\mathbf w_2] &= [0102] , &&\mathbf v_2 = (2,1,1),\label{eqn:an-3} \\
    [\mathbf w_3] &= [0102010] , &&\mathbf v_3 = (4,2,1),\label{eqn:an-4}\\
    [\mathbf w_4] &= [0102010010201] , \quad&&\mathbf v_4 = (7,4,2).\label{eqn:an}
\end{alignat}
A \textbf{subword} of $[w_0\cdots w_l]$ is a substring of the form $[w_0\cdots w_{l'}]$, $0\le l'\le l$.
For example, from Equations \eqref{eqn:an-1} - \eqref{eqn:an}, 
$[\mathbf w_i]$ is a subword for $[\mathbf w_j]$ for $i<j$.
We consider the empty word as a subword of any word.
We can concatenate words to make another word.
For example, from Equations \eqref{eqn:an-1} - \eqref{eqn:an-4}, 
\begin{equation}
	[\mathbf w_3] = [\mathbf w_2][\mathbf w_1][\mathbf w_0] = [0102010].
\end{equation} 

A \textbf{substitution on $d$ letters} is a map that transforms single-character words
$[0],\cdots, [d-1]$ to words on $d$ letters.
For example, $\sigma_i$, $i=0,1,2,3$, defined below are substitutions on $3$ letters:
\begin{alignat}{3}
   & \sigma_0([0]) = [01], &&\sigma_0([1]) = [02], \quad&&\sigma_0([2]) = [0]\label{eqn:pisot-examples-1} \\
    &\sigma_1([0]) = [12], &&\sigma_1([1]) = [2], &&\sigma_1([2]) = [0] \label{eqn:pisot-examples-2}\\
    &\sigma_2([0]) = [0102], \quad&&\sigma_1([1]) = [2], &&\sigma_1([2]) = [0] \label{eqn:pisot-examples-3}\\
    &\sigma_3([0]) = [01], &&\sigma_2([1]) = [2], &&\sigma_2([2]) = [0]\label{eqn:pisot-examples-4}
\end{alignat}
A substitution can transforms any words as follows
\begin{equation}\label{eqn:substitution}
	\sigma([w_0\cdots w_l]) = \sigma([w_0])\cdots \sigma([w_l]).
\end{equation}
With the initial word $[\mathbf w_0]=[0]$, 
any substitution $\sigma$ generates the sequence of words $[\mathbf w_n]$ recursively as follows.
\begin{equation}\label{eqn:a-n-1}
	[\mathbf w_{n+1}] = \sigma([\mathbf w_n])\textrm{ for }n\ge0.
\end{equation}
For example, $[\mathbf w_n]$, $n=0,\cdots,4$, in Equations \eqref{eqn:an-1} - \eqref{eqn:an} are the first five sequence
generated by the substitution $\sigma_0$ in Equation \eqref{eqn:pisot-examples-1}.
The substitution $\sigma_0$ in Equation \eqref{eqn:pisot-examples-1} is called the 
\textbf{Rauzy substitution}.
The sequence of words $[\mathbf w_n]$ in Equation \eqref{eqn:a-n-1}
obtained by the Rauzy substitution is called the \textbf{tribonacci words}.
Let us denote the tribonacci words as $[\mathbf a_n]$ to distinguish
them from general notation for words
The tribonacci words satisfies the following recursive formula.
\begin{equation}\label{eqn:an-recursive}
	[\mathbf a_{n+3}] = [\mathbf a_{n+2}][\mathbf a_{n+1}][\mathbf a_n]\textrm{ for }n\ge 0.
\end{equation}
Let $\mathbf v_n$ be the word vector for the word $[\mathbf w_n]$ in Equation \eqref{eqn:a-n-1}.
We can associate a unique $d\times d$ matrix $M$ for each substitution $\sigma$ that satisfies
\begin{equation}\label{eqn:an1m}
	\mathbf v_{n+1} = M\mathbf v_n\textrm{ for }n\ge 0
\end{equation}
A $d\times d$ matrix $M$ is called a \textbf{Pisot matrix} if 
its characteristic polynomial has a unique real root $\lambda$ greater than $1$ 
and all the other (complex) roots have absolute values less than $1$.
A substitution $\sigma$ is called a \textbf{Pisot substitution} if
the corresponding matrix $M$ in Equation \eqref{eqn:an1m} is a Pisot matrix.
The unique real root $\lambda$ is called a \textbf{Pisot number}.

\begin{table}
\centering
\begin{tabular}{|c|c|c|c|}
\hline
&&&\\[-1em]
\shortstack{Pisot \\ substitution} & \shortstack{Pisot \\ matrix} & 
\shortstack{characteristic \\ polynomial} & \shortstack{Pisot \\ number} \\ \hhline{|=|=|=|=|}
&&&\\[-1em]
$\sigma_0$ & $\begin{bmatrix}
    1 & 1 & 1 \\
    1 & 0 & 0 \\
    0 & 1 & 0 \end{bmatrix}$ & $1 + \lambda + \lambda^2 = \lambda^3$ & $1.8393$ \\\hline
&&&\\[-1em]
$\sigma_1$ & $\begin{bmatrix}
        0 & 1 & 1 \\
        0 & 0 & 1 \\
        1 & 0 & 0 \end{bmatrix}$ & $1 + \lambda = \lambda^3$ & $1.3247$ \\\hline
&&&\\[-1em]
$\sigma_2$ & $\begin{bmatrix}
        2 & 1 & 1 \\
        0 & 0 & 1 \\
        1 & 0 & 0
    \end{bmatrix}$ & $1 + \lambda + 2\lambda^2 = \lambda^3$ & $2.5468$ \\\hline
&&&\\[-1em]
$\sigma_3$ & $\begin{bmatrix}
        1 & 1 & 0 \\
        0 & 0 & 1 \\
        1 & 0 & 0
    \end{bmatrix}$ & $1 + \lambda^2 = \lambda^3$ & $1.4656$ \\[1em]\hline
    \end{tabular}
   \caption{Matrices associated with Pisot substitution}
   \label{tab:pisot-mat}
    \end{table}

Table \ref{tab:pisot-mat} shows the Pisot matrix and Pisot number
for the substitutions $\sigma_i$ in Equation \eqref{eqn:pisot-examples-1} - \eqref{eqn:pisot-examples-4}.
The Pisot matrix has a real eigenvector $\mathbf v_\infty$
whose eigenvalue is the Pisot number $\lambda$.
\begin{equation}
	M\mathbf v_\infty = \lambda\mathbf v_\infty.
\end{equation}
Indeed, the limit $\lim \mathbf v_n/\Vert\mathbf v_n\Vert$ is the eigenvector with the eigenvalue $\lambda$,
where $\mathbf v_n$ is the word vector for $[\mathbf w_n]$ in Equation \eqref{eqn:a-n-1}.
The hyperplane $P$ in $\mathbb R^d$ orthogonal to $\mathbf v_\infty$
is call the \textbf{contracting plane}.
For each $[\mathbf w_n]$, let $\mathbf v_l$ be the word vector for the subword of $[\mathbf w_n]$
with the length $l$. 
Let $\pi:\mathbb R^d\to P$ be the orthogonal projection and define the set $R_n$ as 
\begin{equation}\label{eqn:rn}
	R_n = \{\mathbf 0\}\cup \{\pi(\mathbf v_l)\,|\,0\le l\le L\}.
\end{equation}
We will call the set $R = \displaystyle \bigcup_{n=0}^\infty R_n$
the \textbf{Pisot domain} for $\sigma$.

\begin{figure}
    \centering
    \begin{subfigure}[b]{0.29\textwidth}
        \centering
        \includegraphics[scale=.22]{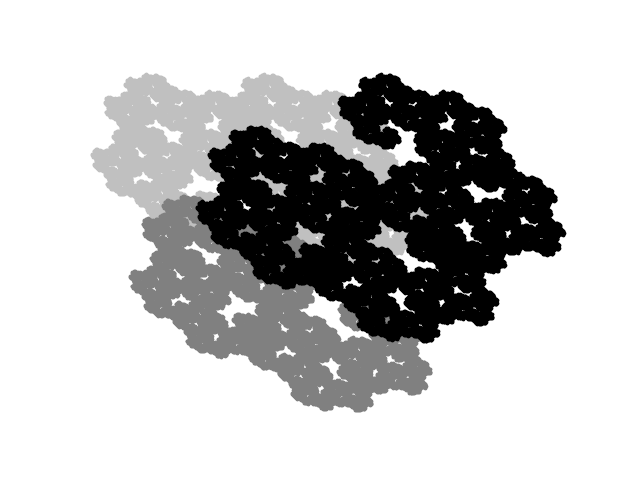}
        \caption{The Pisot domain for $\sigma_1$}
        \label{}
    \end{subfigure}
    \hfill
    \begin{subfigure}[b]{0.29\textwidth}
        \centering
        \includegraphics[scale=.22]{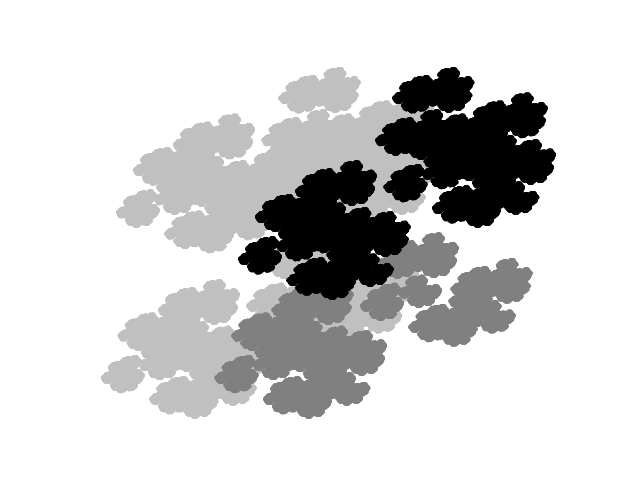}
        \caption{The Pisot domain for $\sigma_2$}
        \label{}
    \end{subfigure}
        \hfill
    \begin{subfigure}[b]{0.29\textwidth}
        \centering
        \includegraphics[scale=.22]{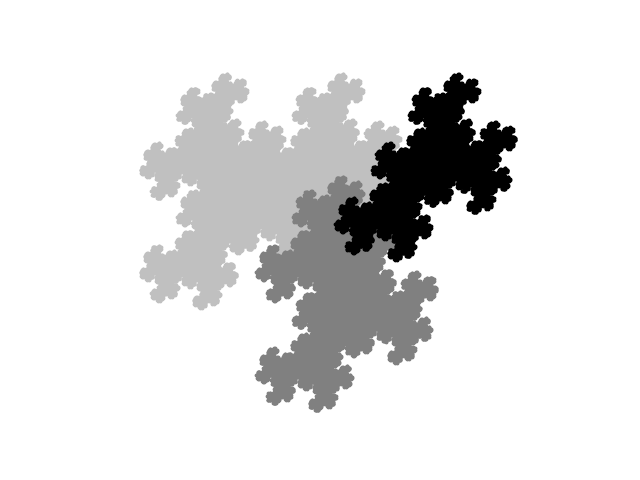}
        \caption{The Pisot domain for $\sigma_3$}
        \label{}
    \end{subfigure}
    \caption{Pisot domains for Pisot substitutions in three colors.}
    \label{fig:pisot-domains}
\end{figure}

Figure \ref{fig:rauzy-three-colors} is the Pisot domain for the Rauzy substitution $\sigma_0$,
and Figure \ref{fig:pisot-domains} are the Pisot domains for the substitutions $\sigma_i$, $i=1,2, 3$.
Here we explain how to obtain such figures. Let
\begin{equation}\label{eqn:ortho}
\mathbf e_0 = (1,0,0), \mathbf e_1 = (0,1,0), \mathbf e_2 = (0,0,1)
\end{equation}
be the standard orthogonal basis for $\mathbb R^3$.
We choose sufficiently long word $[\mathbf w_n]=[w_0\cdots w_L]$, and its word vector $\mathbf v_n$.
We approximate $\mathbf v_\infty \approx \mathbf v_n$,
to define the contracting plane $P$ to be the hyperbolic plane orthogonal to $\mathbf v_n$.
(The size of $n$ is determined by how ``dense'' the figure obtained at the end looks.)
We run the Gram--Schmidt process on the set
$\{\mathbf v_{n}/\Vert\mathbf v_{n}\Vert, \mathbf e_0, \mathbf e_1\}$
to obtain two orthonormal basis $\mathbf r_0$, $\mathbf r_1$ on $P$.
For each subword $[w_0\cdots w_l]$, $0\le l\le L$, of $[\mathbf w_n]$,
let $\mathbf v_l$ be its word vector, and define the $2$-dimensional vector $\mathbf x_l$ as follows.
\begin{equation}\label{eqn:xl}
	\mathbf x_l = (\pi(\mathbf v_l)\cdot \mathbf r_0, \pi(\mathbf v_l)\cdot \mathbf r_1).
\end{equation}
We then plot the dot at $\mathbf x_l$ for all $0\le l\le L$ 
with the color depending on the letter of the character $w_l$.

\section{Rauzy fractal construction A}\label{sec:rauzy-const-1}

Our prime concern is to understand the Rauzy fractal (Figure \ref{fig:rauzy-three-colors}) and its boundary.
We will consider the Rauzy substitution $\sigma_0$ only from now on.
We will call a $2$-dimensional point $\mathbf x$ to be \textbf{Rauzy} 
if $\mathbf x \in R_n$ for some $n\ge 0$.
Equation \eqref{eqn:an-recursive} implies that $R_n\subset R_{n+1}$.
Thus we can identify Rauzy points by the lengths of corresponding subwords uniquely.

Let $\mathbf e_0,\mathbf e_1,\mathbf e_2$ be the standard basis on $\mathbb R^3$
defined in Equation \eqref{eqn:ortho}.
Let $\mathbf u_0$, $\mathbf u_1$, and $\mathbf u_2$ be the $2$-dimensional vectors defined by
\begin{equation}\label{eqn:ui}
	\mathbf u_i = (\pi(\mathbf e_i)\circ \mathbf r_0, \pi(\mathbf e_i)\circ \mathbf r_1),\textrm{ for } i=0,1,2,
\end{equation}
where $\mathbf r_0, \mathbf r_1$ are orthonormal basis on the contracting plane $P$
for the Rauzy substitution.

Let $\mathbf b_n$ be the Rauzy point for the tribonacci words $[\mathbf a_n]$.
The first four $\mathbf b_n$ are
\begin{align}
 \mathbf b_0 &= \mathbf u_0,  \\
 \mathbf b_1 &= \mathbf u_0 + \mathbf u_1,  \\
 \mathbf b_2 &= 2\mathbf u_0 + \mathbf u_1 + \mathbf u_2, \\
 \mathbf b_3 &= 4\mathbf u_0 + 2\mathbf u_1 + \mathbf u_2, \\
 \mathbf b_4 &= 7\mathbf u_0 + 4\mathbf u_1 + 2\mathbf u_2.
\end{align}
For a semantic reason, let us denote $\mathbf b_i^{(j)} = \mathbf b_{3i+j}$.
The \textbf{A-layer at the level $i$} is the set $V_i$ of Rauzy poitns defined inductively as follows:
\begin{align}
V_{-1} &= \{\mathbf 0\}, \textrm{ and for }i\ge 0,\label{eqn:vi-layer-1} \\
V_i^{(0)} &= \{\mathbf x + \mathbf b_i^{(0)}\,|\, \mathbf x\in \bigcup_{j=-1}^{i-1}V_j\}, \label{eqn:vi-layer-3} \\
V_i^{(1)} &= \{\mathbf x + \mathbf b_i^{(1)},\mathbf x + \mathbf b_i^{(0)}+\mathbf b_i^{(1)}\,|\, \mathbf x\in \bigcup_{j=-1}^{i-1}V_j\}, \label{eqn:vi-layer-4} \\
V_i^{(2)} &= \{\mathbf x + \mathbf b_i^{(2)}, \mathbf x + \mathbf b_i^{(0)} + \mathbf b_i^{(2)}, \mathbf x + \mathbf b_i^{(1)} + \mathbf b_i^{(2)}\,|\, \mathbf x\in \bigcup_{j=-1}^{i-1}V_j\},\label{eqn:vi-layer-5} \\
V_i &= \bigcup_{j=0}^2 V_i^{(j)}.\label{eqn:vi-layer-6}
\end{align}
We have the following result.
\begin{thm}\label{thm:main-1}
The set of all Rauzy points is the union of all A-layers at all levels.
\end{thm}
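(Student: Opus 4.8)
The plan is to identify the set of all Rauzy points with the set of Rauzy points of prefixes of the infinite tribonacci word, and then to show that the A-layers enumerate these prefixes exactly, layer by layer, in order of increasing length. Write $\mathbf a_\infty=\lim_n[\mathbf a_n]$ for the infinite tribonacci word, $\mathbf v_\ell$ for the word vector of its length-$\ell$ prefix, and $\Pi\colon\mathbb R^3\to\mathbb R^2$ for the linear map with $\Pi(\mathbf e_i)=\mathbf u_i$, so that the Rauzy point of the length-$\ell$ prefix is $\Pi(\mathbf v_\ell)$ and $\mathbf b_n=\Pi(\mathbf v_n)$. The first observation is that every Rauzy point is $\Pi(\mathbf v_\ell)$ for a \emph{unique} $\ell\ge0$: since $\ker\pi=\mathbb R\mathbf v_\infty$ and $\mathbf v_\infty\propto(\lambda^2,\lambda,1)$ has irrational coordinate ratios, the map $\pi$ (hence $\Pi$) is injective on $\mathbb Z^3$, while $\mathbf a_\infty$ has exactly one prefix of each length and distinct lengths force distinct word vectors. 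Thus it suffices to prove $\bigcup_iV_i=\{\Pi(\mathbf v_\ell):\ell\ge0\}$. Setting $T_n=L([\mathbf a_n])$ and $W_i=\bigcup_{j=-1}^iV_j$, a direct unwinding of \eqref{eqn:vi-layer-1}--\eqref{eqn:vi-layer-6} gives $W_i=W_{i-1}\cup(W_{i-1}+D_i)$, where $D_i=\{\mathbf b_i^{(0)},\mathbf b_i^{(1)},\mathbf b_i^{(0)}+\mathbf b_i^{(1)},\mathbf b_i^{(2)},\mathbf b_i^{(0)}+\mathbf b_i^{(2)},\mathbf b_i^{(1)}+\mathbf b_i^{(2)}\}$ is the six-element set of level-$i$ translations.

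The core claim, proved by induction on $i$, is that $W_i=\{\Pi(\mathbf v_\ell):0\le\ell<T_{3i+3}\}$; equivalently, $V_i$ consists precisely of the Rauzy points of the prefixes whose length lies in $[T_{3i},T_{3i+3})$. The base cases $i=-1,0$ are checked by hand from \eqref{eqn:an-1}--\eqref{eqn:an}. For the inductive step I assume $W_{i-1}=\{\Pi(\mathbf v_\ell):0\le\ell<T_{3i}\}$, i.e.\ the prefixes of $[\mathbf a_{3i}]$, and analyze the six translates. Each element of $D_i$ equals $\Pi(\mathbf v_s)$ for one of the offsets $s\in\{T_{3i},\,T_{3i+1},\,T_{3i}+T_{3i+1},\,T_{3i+2},\,T_{3i}+T_{3i+2},\,T_{3i+1}+T_{3i+2}\}$, because the recursion $[\mathbf a_{n+3}]=[\mathbf a_{n+2}][\mathbf a_{n+1}][\mathbf a_n]$ exhibits the length-$s$ prefix of $\mathbf a_\infty$ as exactly the matching concatenation of blocks (for instance the length-$(T_{3i}+T_{3i+2})$ prefix is $[\mathbf a_{3i+2}][\mathbf a_{3i}]$). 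Granting this, $W_{i-1}+\Pi(\mathbf v_s)=\{\Pi(\mathbf v_s+\mathbf v_\ell):0\le\ell<T_{3i}\}$, so the entire step reduces to the additivity identity
\begin{equation*}
\mathbf v_{s+\ell}=\mathbf v_s+\mathbf v_\ell,\qquad 0\le\ell<T_{3i},
\end{equation*}
which I call $(\dagger)$: once $(\dagger)$ holds for an offset $s$, the corresponding translate is exactly the set of Rauzy points of the prefixes of length in $[s,s+T_{3i})$.

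Verifying $(\dagger)$ is the heart of the argument, and the two offsets $s=T_{3i}+T_{3i+1}$ and $s=T_{3i}+T_{3i+2}$ (the ones pairing the smallest block $\mathbf b_i^{(0)}$ with a larger one) are the main obstacle. For the other four offsets $(\dagger)$ is immediate: the recursion together with the nesting ``$[\mathbf a_k]$ is a prefix of $[\mathbf a_{k+1}]$'' shows that the length-$T_{3i}$ factor of $\mathbf a_\infty$ beginning at position $s$ is \emph{literally} $[\mathbf a_{3i}]$, whence $\mathbf a_\infty[s,s+\ell)=\mathbf a_\infty[0,\ell)$ as words and the vectors agree. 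For the two remaining offsets this factor is not $[\mathbf a_{3i}]$ verbatim, so $(\dagger)$ is an abelian (letter-count) coincidence rather than a literal one; here I would peel off the leading block of the factor and reduce to shorter, already-established instances of $(\dagger)$, using the self-similarity $[\mathbf a_{3i}]=\sigma_0^{3}([\mathbf a_{3i-3}])$ to organize this as a secondary induction. Concretely, at $s=T_{3i}+T_{3i+1}$ the factor splits as $[\mathbf a_{3i-1}]$ followed by a prefix of $[\mathbf a_{3i+1}]$, and the two pieces reproduce $\mathbf v_\ell$ after one further unraveling into the $[\mathbf a_{3i-2}][\mathbf a_{3i-3}]$-agreement of $\mathbf a_\infty$ with its shift by $T_{3i-1}$; the offset $s=T_{3i}+T_{3i+2}$ is handled identically.

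Finally I would check that the six intervals $[s,s+T_{3i})$ cover $[T_{3i},T_{3i+3})$ with no gaps---pure tribonacci arithmetic, the successive overlaps having lengths $T_{3i-3},0,T_{3i-2}+T_{3i-3},0,T_{3i-3}$---and that on each overlap the two translates produce the same point, which is automatic: by $(\dagger)$ both equal $\Pi(\mathbf v_m)$ for the common length $m$, and $\Pi$ is injective on $\mathbb Z^3$. This establishes the displayed claim for every $i$, and taking the union over $i\ge-1$ yields $\bigcup_iV_i=\{\Pi(\mathbf v_\ell):\ell\ge0\}$, i.e.\ the set of all Rauzy points, as desired.
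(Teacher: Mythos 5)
Your proposal is correct, and although it rests on the same two ingredients as the paper's proof --- the recursion \eqref{eqn:an-recursive} and the identification of Rauzy points with prefixes of the infinite tribonacci word --- it is organized along a genuinely different line. The paper proves Equation \eqref{eqn:vir} in two separate directions: the forward inclusion via the length-bound lemma (if $L(\mathbf x)\le L(\mathbf b_{n-1})+L(\mathbf b_{n-2})$ then $\mathbf x+\mathbf b_n$ is Rauzy), applied through a case analysis of the points $\mathbf x_0,\dots,\mathbf x_{42}$ with ad hoc re-expressions for the exceptional points $\mathbf x_{12}$ and $\mathbf x_{23},\mathbf x_{24},\mathbf x_{25}$ whose naive length bounds fail; and the reverse inclusion by decomposing the word of an arbitrary Rauzy point with $L(\mathbf b_{3i})\le L(\mathbf x)<L(\mathbf b_{3i+3})$ as $[\mathbf a_{3i}][\mathbf w']$ and so on. Your additivity identity $(\dagger)$ subsumes both directions at once: each translate $W_{i-1}+\Pi(\mathbf v_s)$ is exactly the set of prefix-points with lengths in $[s,s+T_{3i})$, and the covering of $[T_{3i},T_{3i+3})$ by the six intervals (your stated overlap lengths $T_{3i-3},0,T_{3i-2}+T_{3i-3},0,T_{3i-3}$ check out against the tribonacci recursion) finishes the level in one stroke. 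Your route also buys two things the paper leaves implicit: the injectivity of $\pi$ on $\mathbb Z^3$ via the irrationality of the coordinates of $\mathbf v_\infty$, which is what makes ``the length of a Rauzy point'' well defined and which the paper merely asserts; and your cumulative formulation $W_i=\{\Pi(\mathbf v_\ell):0\le\ell<T_{3i+3}\}$ quietly repairs the literal statement of \eqref{eqn:vir}, since $V_i$ alone does not contain the origin or the lower layers. As a bonus, the interval overlaps explain transparently the two-parent points visible in Figure \ref{fig:rauzy-2}. One correction: at the offsets $s=T_{3i}+T_{3i+1}$ and $s=T_{3i}+T_{3i+2}$ the coincidence is not merely abelian but \emph{literal} --- your own peeling computation shows the length-$T_{3i}$ factor of $\mathbf a_\infty$ at $s$ is $[\mathbf a_{3i-1}][\mathbf a_{3i-2}][\mathbf a_{3i-3}]=[\mathbf a_{3i}]$ verbatim (e.g.\ for $i=1$, $s=20$, the factor is $0102010$), so $(\dagger)$ holds there for the same reason as at the other four offsets, the secondary induction collapses to a direct block-splitting, and the appeal to the self-similarity $[\mathbf a_{3i}]=\sigma_0^3([\mathbf a_{3i-3}])$ is unnecessary; this only simplifies and strengthens your argument.
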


\begin{figure}
\centering
\begin{subfigure}[t]{0.475\textwidth}
    \centering
    \includegraphics[scale=.3]{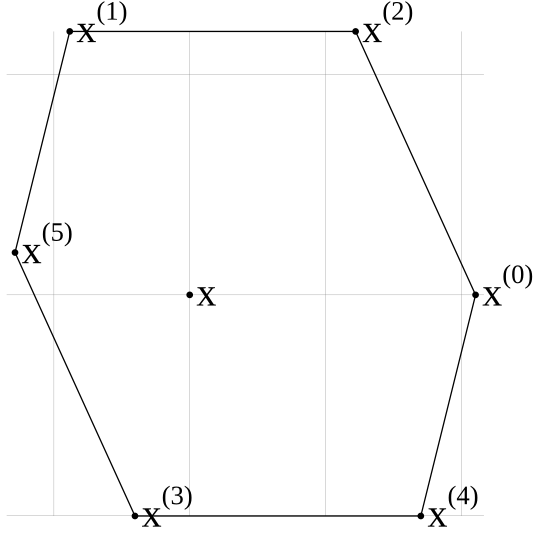}
    \caption{The six children points $\mathbf x^{(j)}$ form a cell of their parent point $\mathbf x$.}
    \label{fig:rauzy-1}
\end{subfigure}
\hfill 
\begin{subfigure}[t]{0.475\textwidth}
\centering
\includegraphics[scale=.2]{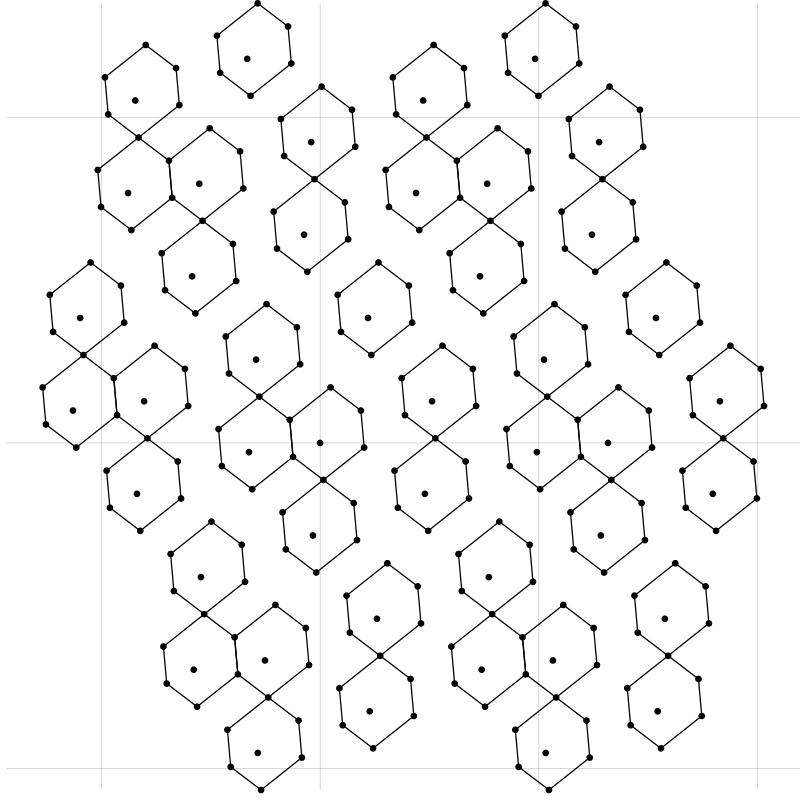}
\caption{The cells at each level are uniformly distributed.}
\label{fig:rauzy-4}
\end{subfigure}
\caption{Rauzy points in Rauzy fractal. The $x$, $y$-axis are equally scaled.}
\label{fig:rauzy-boundary-1}
\end{figure}

Before we prove the theorem, let us show how we can use this theorem 
to construct Rauzy fractal and its boundary.
For any $\mathbf x\in V_{i-1}$, the following six points are in $V_i$:
\begin{alignat}{2}
\mathbf x^{(0)} &= \mathbf x + \mathbf b_i^{(0)} &&\in V_i^{(0)},\label{eqn:points-vi-1}\\
\mathbf x^{(1)} &= \mathbf x + \mathbf b_i^{(1)} &&\in V_i^{(1)},\label{eqn:points-vi-2}\\
\mathbf x^{(2)} &= \mathbf x + \mathbf b_i^{(0)} + \mathbf b_i^{(1)} &&\in V_i^{(1)},\label{eqn:points-vi-3}\\
\mathbf x^{(3)} &= \mathbf x + \mathbf b_i^{(2)} &&\in V_i^{(2)},\label{eqn:points-vi-4}\\
\mathbf x^{(4)} &= \mathbf x + \mathbf b_i^{(0)} + \mathbf b_i^{(2)} &&\in V_i^{(2)},\label{eqn:points-vi-5}\\
\mathbf x^{(5)} &= \mathbf x + \mathbf b_i^{(1)} + \mathbf b_i^{(2)} &&\in V_i^{(2)}.\label{eqn:points-vi-6} 
\end{alignat}
These points forms a hexagon around $\mathbf x$ (Figure \ref{fig:rauzy-1}).
We will call this hexagon as the \textbf{cell} of $\mathbf x$.
The point $\mathbf x^{(j)}$, $j=0,\cdots,5$, is a \textbf{child} of its \textbf{parent} $\mathbf x$.
Theorem \ref{thm:main-1} implies that Rauzy fractal can be filled recursively 
by tracing the genealogy of the Rauzy points, starting from the origin $\mathbf 0$.
To plot the boundary of Rauzy fractal, 
we trace only the descendents that lie outside of the cells of their ancestors.
These points are called the \textbf{boundary} points.
(To speed up the computation, we can only check whether a points lie inside of the cell of its grandparent.)
At each level of layers, we produce the children only for the boundary points,
and they are clustered into cells that are uniformly distributed.
Figure \ref{fig:rauzy-4} shows all points in A-layers up to the level $3$. 
In fact, they are the first $274$ Rauzy points, including the origin.
The cells are only drawn for the points in A-layer at the level $3$.
The interior points in cells are all the points in A-layer up to the level $2$.
We can see that there is no conflict on the geneology of points:
no two points in the same level of A-layer never enjoy a child-parent relationship.

\begin{figure}
    \centering
    \begin{subfigure}[b]{0.49\textwidth}
        \centering
        \includegraphics[scale=.18]{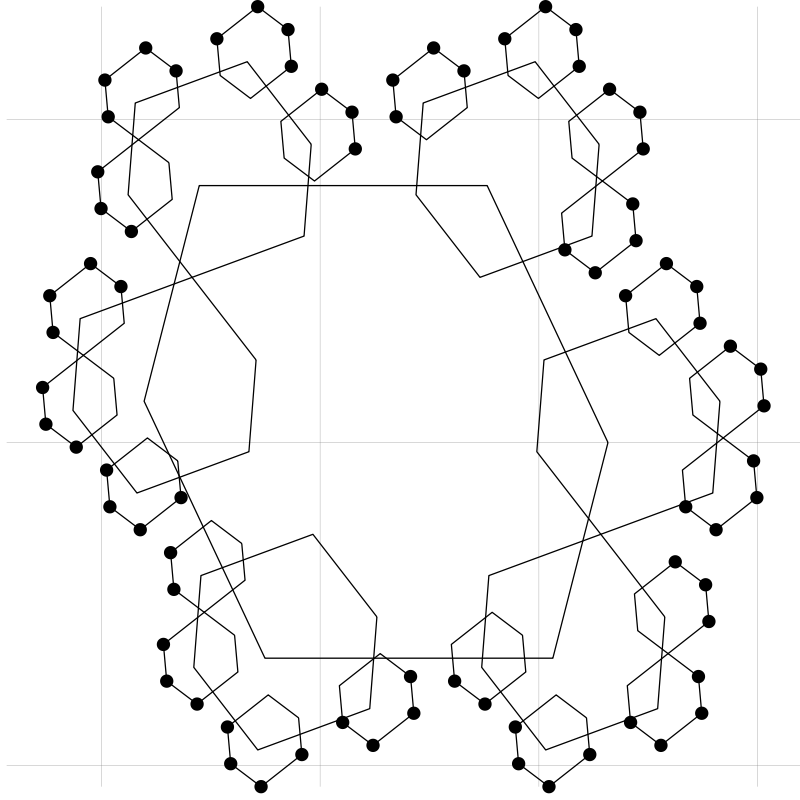}
        \caption{The boundary points and cells up to the level $3$.}
        \label{fig:rauzy-boundary-2}
    \end{subfigure}
    \begin{subfigure}[b]{0.49\textwidth}
        \centering
        \includegraphics[scale=.18]{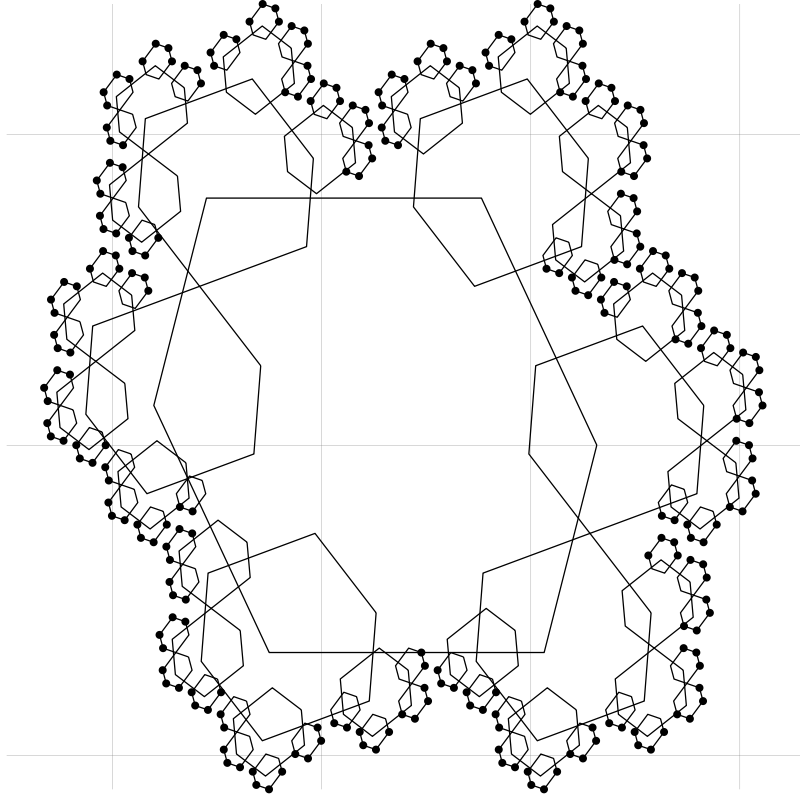}
        \caption{The boundary points and cells up to the level $4$.}
        \label{fig:rauzy-boundary-3}
    \end{subfigure}
	\hfill
    \begin{subfigure}[b]{0.49\textwidth}
        \centering
        \includegraphics[scale=.18]{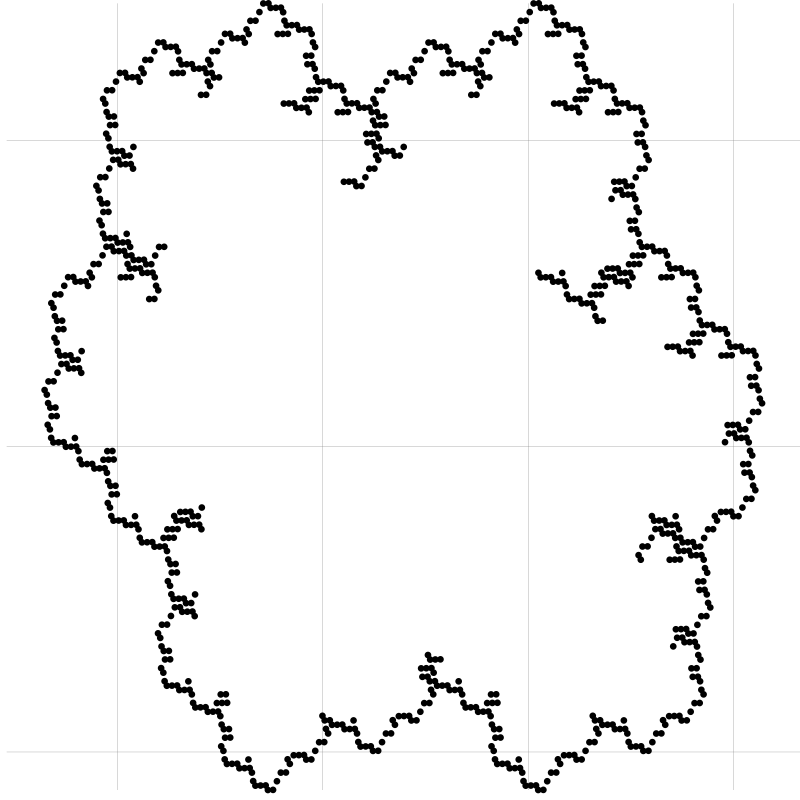}
        \caption{The boundary points up to the layer $5$.}
        \label{fig:rauzy-boundary-5}
    \end{subfigure}
    \begin{subfigure}[b]{0.49\textwidth}
        \centering
        \includegraphics[scale=.18]{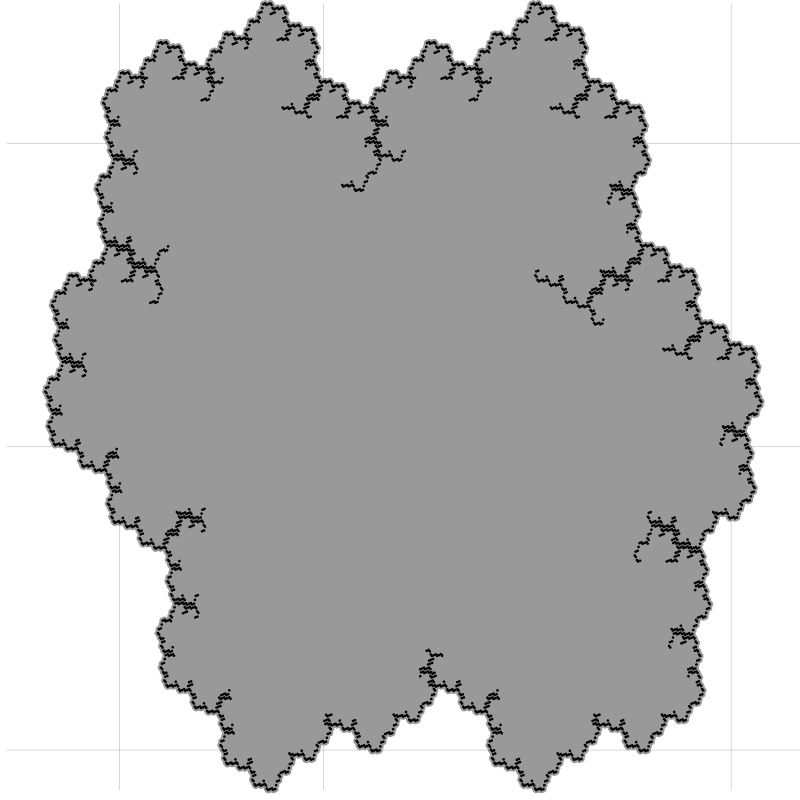}
        \caption{The boundary of Rauzy fractal with its interior.}
        \label{fig:rauzy-boundary-interior}
    \end{subfigure}
    \caption{The boundary Rauzy points in A-layers and the cells.}
    \label{fig:rauzy-boundary}
\end{figure}

Figure \ref{fig:rauzy-boundary-2} shows the Rauzy points (filled-dots) 
in A-layers at the level $3$.
The hexagons are the cells of boundary points in the previous layers.
We can plot only the boundary points at each layer to determine the boundary of Rauzy fractal. 
Figure \ref{fig:rauzy-4} shows all boundary points in the A-layer at the level $4$
and the cells of all boundary points in the previous layers.
One can see that the boundary point at the top layer forms the boundary of the Rauzy fractal.
Figure \ref{fig:rauzy-boundary-5} shows the boundary points at the level $5$,
and Figure \ref{fig:rauzy-boundary-interior} shows them with Rauzy fractal.
One might wonder why there seems to be several \textit{estuaries} (rivers that meet the ocean),
if we put an analogy of Rauzy fractal as the ``land" and its complement as an ``ocean".
This phenomenon does not mean that the Rauzy fractal has a vacuous ``valley".
It is a consequence of discarding non-boundary points at each level.
To reduce the computational complexity, it is inevitable to disregard some cells
when checking the interior points. 
As we observed in Figure \ref{fig:rauzy-4}, all Rauzy points are almost-uniformly distributed.
The shape of the boundary of the Rauzy fractal in Figure \ref{fig:rauzy-boundary-5}
is optimal in our setting.
To remove all \textit{estuaries}, we should consider all cells at each level of A-layers.
This is essentially the same as the conventional way of drawing the Rauzy fractal,
and requires the same amount of computational complexity.

Let us prove Theorem \ref{thm:main-1}. We will prove the following holds for all $i\ge 0$:
\begin{equation}\label{eqn:vir}
	V_i = R_{3i+3} - \{\mathbf b_{3i+3}\}.
\end{equation}
Since every Rauzy point $\mathbf x_l$ is uniquely identified with the length of corresponding substring $[\mathbf w_l]$ of a tribonacci word,
we define the \textbf{length} of the Rauzy point $\mathbf x_l$ as 
the length of its subword $[\mathbf w_l]$:
\begin{equation}
	L(\mathbf x_l) = L([\mathbf w_l]).
\end{equation}
The following lemma is the key to the proof of Equation \eqref{eqn:vir}.
\begin{lem*}\label{lem:xbn}
If $\mathbf x$ is a Rauzy point satisfying 
\begin{equation}\label{eqn:lwl}
	L(\mathbf x) \le L(\mathbf b_{n-1}) + L(\mathbf b_{n-2}),
\end{equation}
then $\mathbf x + \mathbf b_n$ is also a Rauzy point.
\end{lem*}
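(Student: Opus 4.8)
The plan is to translate the statement entirely into the combinatorics of the infinite tribonacci word and then read it back through the (linear) projection $\pi$. First I would record that, because $[\mathbf a_m]$ is always a prefix of $[\mathbf a_{m+1}]$ (indeed $[\mathbf a_{m+1}]=[\mathbf a_m][\mathbf a_{m-1}][\mathbf a_{m-2}]$ by Equation \eqref{eqn:an-recursive}), the tribonacci words form a nested family converging to a single right-infinite word $\mathbf a_\infty$. Every prefix of every tribonacci word is then a prefix of $\mathbf a_\infty$, so a Rauzy point $\mathbf x$ is exactly the $2$-dimensional vector associated with the word vector of the unique prefix $P$ of $\mathbf a_\infty$ with $L(P)=L(\mathbf x)$. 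In this dictionary, $\mathbf b_n$ corresponds to $[\mathbf a_n]$, and since the word vector of a concatenation is the sum of the word vectors while both $\pi$ and the passage to $\mathbf r_0,\mathbf r_1$ coordinates are linear, the point $\mathbf x+\mathbf b_n$ is precisely the vector associated with the word vector of the concatenation $[\mathbf a_n]P$. Thus it suffices to prove that $[\mathbf a_n]P$ is again a prefix of $\mathbf a_\infty$.

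Next I would set $l=L(\mathbf x)=L(P)$ and use the hypothesis $l\le L(\mathbf b_{n-1})+L(\mathbf b_{n-2})$, that is $l\le T_{n-1}+T_{n-2}$, where $T_m=L([\mathbf a_m])$ is the tribonacci length sequence $1,2,4,7,13,\dots$. The key structural claim is that the factor of $\mathbf a_\infty$ beginning immediately after its initial block $[\mathbf a_n]$ agrees with $\mathbf a_\infty$ itself for the first $T_{n-1}+T_{n-2}$ letters. I would establish this from two expansions of the recursion. On the one hand, $[\mathbf a_{n+1}]=[\mathbf a_n][\mathbf a_{n-1}][\mathbf a_{n-2}]$ shows that the letters of $\mathbf a_\infty$ in positions $T_n,\dots,T_n+T_{n-1}+T_{n-2}-1$ are exactly $[\mathbf a_{n-1}][\mathbf a_{n-2}]$. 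On the other hand, $[\mathbf a_n]=[\mathbf a_{n-1}][\mathbf a_{n-2}][\mathbf a_{n-3}]$ shows that $[\mathbf a_{n-1}][\mathbf a_{n-2}]$ is itself a prefix of $[\mathbf a_n]$, hence of $\mathbf a_\infty$. Comparing the two, the prefix of $\mathbf a_\infty$ of any length $l\le T_{n-1}+T_{n-2}$ coincides character-by-character with the length-$l$ factor of $\mathbf a_\infty$ starting at position $T_n$.

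Finally I would conclude: since $P$ is the length-$l$ prefix of $\mathbf a_\infty$ and, by the previous step, equals the length-$l$ factor of $\mathbf a_\infty$ starting at position $T_n$, the word $[\mathbf a_n]P$ agrees with $\mathbf a_\infty$ on its first $T_n+l$ characters and is therefore a prefix of $\mathbf a_\infty$. Hence its word vector is one of the $\mathbf v_l$ appearing in Equation \eqref{eqn:rn}, and by the linearity recorded in the first paragraph its associated Rauzy point is $\mathbf b_n+\mathbf x$. Thus $\mathbf x+\mathbf b_n$ is a Rauzy point, as claimed.

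I expect the whole argument to be routine once the dictionary between Rauzy points and prefixes of $\mathbf a_\infty$ is in place; the one genuinely load-bearing step is the self-overlap property of the second paragraph, namely that the continuation of $\mathbf a_\infty$ after the leading $[\mathbf a_n]$ reproduces $\mathbf a_\infty$ for the first $T_{n-1}+T_{n-2}$ letters. The length bound in the hypothesis is exactly the range on which this overlap holds, so I would match it precisely and separately verify the small-$n$ base cases, where some $[\mathbf a_{n-k}]$ is empty or a single letter and the expansion $[\mathbf a_n]=[\mathbf a_{n-1}][\mathbf a_{n-2}][\mathbf a_{n-3}]$ must be read with the appropriate conventions so that the indices remain valid.
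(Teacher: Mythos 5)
Your proposal is correct and takes essentially the same route as the paper: the paper's proof likewise notes that the hypothesis makes the word $[\mathbf w]$ for $\mathbf x$ a prefix of $[\mathbf a_{n-1}][\mathbf a_{n-2}]$, so that $[\mathbf a_n][\mathbf w]$ is a prefix of $[\mathbf a_{n+1}]=[\mathbf a_n][\mathbf a_{n-1}][\mathbf a_{n-2}]$ and hence corresponds to the Rauzy point $\mathbf x+\mathbf b_n$. You simply spell out what the paper leaves implicit --- the dictionary between Rauzy points and prefixes of $\mathbf a_\infty$, the linearity of the projection, and the small-$n$ base cases.
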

\begin{proof}
Let $[\mathbf w]$ be the word corresponding to the Rauzy point $\mathbf x$.
From the condition of the lemma, $[\mathbf w]$ is a substring of the word
$[\mathbf a_{n-1}][\mathbf a_{n-2}]$.
Thus the word $[\mathbf a_n][\mathbf w]$ is a substring of $[\mathbf a_{n+1}]$,
and it contains $[\mathbf a_n]$.
Therefore $\mathbf x + \mathbf b_n$ is a Rauzy point.
\end{proof}

Let us prove Equation \eqref{eqn:vir} by the induction.
Obviously, $V_{-1} = R_0 - \{\mathbf b_0\} = \{\mathbf 0\}$.
We get the first six Rauzy points by letting $\mathbf x=\mathbf 0$ in
Equations \eqref{eqn:points-vi-1}-\eqref{eqn:points-vi-6}.
Thus $V_0 = R_3 -\{\mathbf b_3\}$.
This shows that Equation \eqref{eqn:vir} holds for $i=-1,0$.

Let us assume that Equation \eqref{eqn:vir} holds for $i-2$ and $i-1$ for $i\ge 1$. 
We will show that each set $V_i^{(j)}$ is a subset of $R_{3i+3} -\{\mathbf b_{3i+3}\}$,
and all Rauzy points $\mathbf x$ satisfying $L(\mathbf b_{3i})\le L(\mathbf x)<L(\mathbf b_{3i+3})$ appears in one of $V_i^{(j)}$.

\begin{figure}
\centering
    \includegraphics[scale=.25]{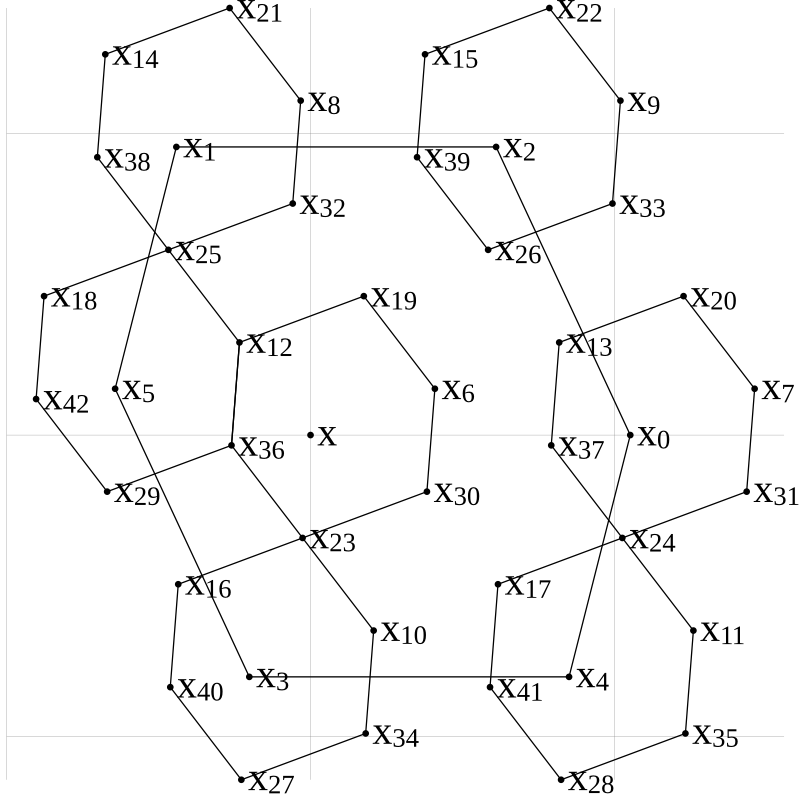}
    \caption{The first $43$ Rauzy points in A-layers up to the level $2$
    are shown with numbered indices. The cells are represented by hexagons.}\label{fig:rauzy-2}
\end{figure}

To help understanding, we will use Figure \ref{fig:rauzy-2},
which shows the first $51$ Rauzy points with the origin $\mathbf x$.
It shows the general configurations of children and grand children of $\mathbf x$. 
We can see that some points have two parents.
For any choice of $\mathbf x\in V_{i-2}$, the ancestries of all points are identical.
The inclusions and exclusions of points to the cells of their ancesters 
may differ by the choice of $\mathbf x$, but our proof does not depend on it.
We assume $\mathbf x\in V_{i-2}$ and 
$\mathbf x_0,\cdots,\mathbf x_5\in V_{i-1}$ be the children of $\mathbf x$
obtained by Equations \eqref{eqn:points-vi-1}-\eqref{eqn:points-vi-6}.
We also assume that $\mathbf x_0,\cdots,\mathbf x_5$ are Rauzy points.
We will show that
\begin{align}
	\mathbf x_6,\cdots,\mathbf x_{12} &\in V_i^{(0)},\label{eqn:x6-1}\\
	\mathbf x_{13},\cdots,\mathbf x_{25} &\in V_i^{(1)},\label{eqn:x6-2}\\
	\mathbf x_{26},\cdots,\mathbf x_{42} &\in V_i^{(2)}.\label{eqn:x6-3}
\end{align}

The points in Equation \eqref{eqn:x6-1} are obtained by adding $\mathbf b_i^{(0)}$:
\begin{equation}\label{eqn:xj7}
	\mathbf x_6 = \mathbf x + \mathbf b_i^{(0)},\quad
	\mathbf x_{j+7} = \mathbf x_j + \mathbf b_i^{(0)}\textrm{ for }0\le j \le 5.
\end{equation}
The assumption $\mathbf x\in V_{i-1}$ implies $L(\mathbf x) < L(\mathbf b_{i-1}^{(0)})$.
Thus we can use Equations \eqref{eqn:points-vi-1}-\eqref{eqn:points-vi-5} 
to show that $\mathbf x$ and $\mathbf x_0,\cdots,\mathbf x_4$ satisfies
the inequality \eqref{eqn:lwl}:
\begin{alignat}{2}
L(\mathbf x) &< L(\mathbf b_{3i-3}) &&< L(\mathbf b_{3i-2}), \label{eqn:lw-1}\\
L(\mathbf x_0) &< L(\mathbf b_{3i-3}) + L(\mathbf b_{3i-3})&&< L(\mathbf b_{3i-1}), \label{eqn:lw-2}\\
L(\mathbf x_1) &< L(\mathbf b_{3i-3}) + L(\mathbf b_{3i-2})&&< L(\mathbf b_{3i-1}), \label{eqn:lw-3}\\
L(\mathbf x_2) &< L(\mathbf b_{3i-3}) + L(\mathbf b_{3i-2}) + L(\mathbf b_{3i-3})&&<L(\mathbf b_{3i-1}) + L(\mathbf b_{3i-2}), \label{eqn:lw-4}\\
L(\mathbf x_3) &< L(\mathbf b_{3i-3}) + L(\mathbf b_{3i-1})&&< L(\mathbf b_{3i-1}) + L(\mathbf b_{3i-2}), \label{eqn:lw-5}\\
L(\mathbf x_4) &< L(\mathbf b_{3i-3}) + L(\mathbf b_{3i-1}) + L(\mathbf b_{3i-3})&&\le L(\mathbf b_{3i-1}) + L([\mathbf a_{3i-2}]). \label{eqn:lw-6}
\end{alignat}
Therefore, $\mathbf x_6,\cdots, \mathbf x_{11}$ are Rauzy points by the lemma.
Equation \eqref{eqn:points-vi-6} shows that
\begin{equation}\label{eqn:lw-7}
L(\mathbf x_5) \le L(\mathbf b_{3i-6}) + L(\mathbf b_{3i-1}) + L(\mathbf b_{3i-2}).
\end{equation}
This does not immediately imply the inequality \eqref{eqn:lwl}.
Meanwhile, $\mathbf x_{12}$ also satisfies
\begin{equation}
	\mathbf x_{12} = \mathbf x + \mathbf b_{i-1}^{(1)} + \mathbf b_{i-1}^{(2)} + \mathbf b_i^{(0)}
	= \mathbf x + \mathbf b_i^{(1)}.
\end{equation}
Therefore $\mathbf x_{12}$ is also a Rauzy point by the lemma.
The point $\mathbf x_{12}$ is the longest Rauzy point among its siblings.
Since the longest length for $\mathbf x$ is $L(\mathbf b_{i-1}^{(0)})-1$, 
\begin{equation}
	\max L(\mathbf x_{12}) = L(\mathbf b_{i-1}^{(0)}) + L(\mathbf b_i^{(1)}) - 1 < L(\mathbf b_i^{(2)}).
\end{equation}
Thus we proved that $V_i^{(0)}\subset R_{3i+2}\subset R_{3i+3}-\{\mathbf b_{3i+3}\}$.

Next, we show that points in Equation \eqref{eqn:x6-2} are Rauzy points.
On the other hands, they are obtained by adding $\mathbf b_i^{(1)}$ to the previous points:
\begin{equation}\label{eqn:xj13}
	\mathbf x_{j+13} = \mathbf x_j + \mathbf b_i^{(1)},\quad 0\le j \le 12.
\end{equation}
From Equations \eqref{eqn:points-vi-2} - \eqref{eqn:points-vi-3}, we can easily check that 
\begin{equation}
	L(\mathbf x_j) \le L(\mathbf b_{i-1}^{(2)}) + L(\mathbf b_i^{(0)})\textrm{ for }0\le j\le 9.
\end{equation}
However, this inequality is not immediate obvious for the following three points.
\begin{align}
	L(\mathbf x_{10}) &= L(\mathbf x) + L(\mathbf b_{i-1}^{(2)}) + L(\mathbf b_i^{(0)}), \\
	L(\mathbf x_{11}) &= L(\mathbf x) + L(\mathbf b_{i-1}^{(0)}) + L(\mathbf b_{i-1}^{(2)}) + L(\mathbf b_i^{(0)}), \\
	L(\mathbf x_{12}) &= L(\mathbf x) + L(\mathbf b_{i-1}^{(1)}) + L(\mathbf b_{i-1}^{(2)}) + L(\mathbf b_i^{(0)}).
\end{align}
In fact, these points are obtained by adding $\mathbf b_i^{(2)}$ to the previous Rauzy points.
\begin{align}
	\mathbf x_{23} &= \mathbf x + \mathbf b_i^{(2)}, \\
	\mathbf x_{24} &= \mathbf x + \mathbf b_{i-1}^{(0)} + \mathbf b_i^{(2)}, \\
	\mathbf x_{25} &= \mathbf x + \mathbf b_{i-1}^{(1)} + \mathbf b_i^{(2)}.\label{eqn:x2325-3}
\end{align}
Therefore $\mathbf x_{23}, \mathbf x_{24}, \mathbf x_{25}$ are Rauzy points by the lemma.
The maximum length of $\mathbf x_{25}$ is 
\begin{equation}
	\max L(\mathbf x_{25}) =  L(\mathbf b_{i-1}^{(0)}) + L(\mathbf b_{i-1}^{(1)}) + L(\mathbf b_i^{(2)}) - 1,
\end{equation}
and it is strictly less than $L(\mathbf b_{i+1}^{(0)})$.
Thus $V_i^{(1)}\subset R_{3i+3}-\{\mathbf b_{3i+3}\}$.

Using Equations \eqref{eqn:points-vi-4} - \eqref{eqn:points-vi-6},
we can apply the lemma to the points in Equation \eqref{eqn:x6-3} without any exception.
The maximum length of the longest Rauzy point is $\mathbf x_{42}$ is
\begin{align}
	\max L(\mathbf x_{42}) 
	&= L(\mathbf b_{i-1}^{(0)}) + L(\mathbf b_{i-1}^{(1)}) + L(\mathbf b_{i-1}^{(2)})  
	+ L(\mathbf b_i^{(1)}) + L(\mathbf b_i^{(2)}) - 1\nonumber\\
	&=L(\mathbf b_i^{(0)}) + L(\mathbf b_i^{(1)}) + L(\mathbf b_i^{(2)}) - 1 \nonumber \\
	&= L(\mathbf b_{i+1}^{(0)}) -1.
\end{align}
Therefore, $V_i^{(2)}\subset R_{3i+3} -\{\mathbf b_{3i+3}\}$.

So far, we showed that
\begin{equation}
	V_i^{(0)} \cup V_i^{(1)} \cup V_i^{(2)} \subset (R_{3i+3} - \{\mathbf b_{3i+3}\}) - (R_{3i} - \{\mathbf b_{3i}\})
\end{equation}
Let us show that the opposite holds too.
First, suppose that $\mathbf x$ is a Rauzy point satisfying
\begin{equation}
	L(\mathbf b_i^{(0)}) \le L(\mathbf x) < L(\mathbf b_i^{(1)}).
\end{equation}
Then the word $[\mathbf w]$ corresponding to $\mathbf x$ is of the form
\begin{equation}\label{eqn:wa3i}
	[\mathbf w] = [\mathbf a_{3i}]\mathbf [\mathbf w']
\end{equation}
where $[\mathbf w']$ is a substring of the word $[\mathbf a_{3i-1}][\mathbf a_{3i-2}]$.
Therefore the word $[\mathbf w']$ is a subword in 
the tribonacci word $[\mathbf a_{3i}]$,
and the corresponding Rauzy points $\mathbf x'$ lies in $V_{i-1}$.
Since Equation \eqref{eqn:wa3i} implies
\begin{equation}
\mathbf x = \mathbf x' + \mathbf b_i^{(0)},
\end{equation}
we havev $\mathbf x\in V_i^{(0)}$.
This is the point $\mathbf x^{(0)}$ in Equation \eqref{eqn:points-vi-1}

Next, suppose that $\mathbf x$ is a Rauzy point satisfying
\begin{equation}
	L(\mathbf b_i^{(1)})\le L(\mathbf x) < L(\mathbf b_i^{(2)}).
\end{equation} 
Then the word $[\mathbf w]$ corresponding $\mathbf x$ is of the form
\begin{equation}
	[\mathbf w] =[\mathbf a_{3i+1}][\mathbf w']
\end{equation}
where $[\mathbf w']$ is a substring of $[\mathbf a_{3i}][\mathbf a_{3i-1}]$.
If $L([\mathbf w']) < L([\mathbf a_{3i}])$, then the 
Rauzy point $\mathbf x'$ corresponding $[\mathbf w']$ lies in $V_{i-1}$, and
\begin{equation}
	\mathbf x = \mathbf x' + \mathbf b_i^{(1)}.
\end{equation}
This implies that $\mathbf x\in V_i^{(1)}$.
In fact, this point is $\mathbf x^{(1)}$ in Equation \eqref{eqn:points-vi-2}.
On the other hand, if $L([\mathbf a_{3i}])\le L([\mathbf w'])$, then $[\mathbf w']$ is of the form
\begin{equation}
	\mathbf w' = [\mathbf a_{3i}][\mathbf w''],
\end{equation}
where $[\mathbf w'']$ is a substring of $[\mathbf a_{3i-1}]$.
In this cacse, the Rauzy point $\mathbf x''$ for $\mathbf w''$ lies in $V_{i-1}^{(2)}\subset V_{i-1}$.
Since
\begin{equation}
	\mathbf x = \mathbf x'' + \mathbf b_i^{(0)} + \mathbf b_i^{(1)},
\end{equation}
we have $\mathbf x\in V_i^{(1)}$. This point is $\mathbf x^{(2)}$ in 
Equation \eqref{eqn:points-vi-3}.

Finally, suppose that $\mathbf x$ satisfies
\begin{equation}
	L(\mathbf b_i^{(2)}) \le L(\mathbf x) < L(\mathbf b_{i+1}^{(0)}).
\end{equation}
Then the Rauzy word $\mathbf w$ for $\mathbf x$ is of the form
\begin{equation}
	[\mathbf w] =[\mathbf a_{3i+2}][\mathbf w']
\end{equation}
where $[\mathbf w']$ is a substring of $[\mathbf a_{3i+1}][\mathbf a_{3i}]$.
If $[\mathbf w']$ is a substring of $[\mathbf a_{3i+1}]$, then
$\mathbf x$ is the point $\mathbf x^{(4)}$ or $\mathbf x^{(5)}$
in Equations \eqref{eqn:points-vi-4} and \eqref{eqn:points-vi-5}.
If $\mathbf w'$ is a substring that is longer than $[\mathbf a_{3i+1}]$, 
then $\mathbf x$ is the point $\mathbf x^{(6)}$ in Equation \eqref{eqn:points-vi-6}.
In either case, we have $\mathbf x \in V_i^{(2)}$.

\section{Rauzy fractal construction B} \label{sec_alternative_construction}

Now we present yet another way to construct the Rauzy fractal. 
Let $\mathbf u_i$ be the $2$-dimensional vector defined in Equation \eqref{eqn:ui}.
We define the vector $\mathbf s_i^{(j)}$ for $i\ge 0$ and $j=0,\ldots,6$ as follows:
\begin{align}
\mathbf s_0^{(0)} &= \mathbf s_0^{(2)} = \mathbf s_0^{(4)} = \mathbf s_0^{(6)} = \mathbf u_0,\label{eq-sn-1}\\
\mathbf s_0^{(1)} &= \mathbf s_0^{(5)} = \mathbf u_1,\quad \mathbf s_0^{(3)} = \mathbf u_2, \textrm{ and for } i\ge0,\label{eq-sn-2}\\
\mathbf s_{i+1}^{(0)} &= \mathbf s_i^{(0)} + \mathbf s_i^{(1)} + \mathbf s_i^{(2)} + \mathbf s_i^{(3)} + \mathbf s_i^{(4)} + \mathbf s_i^{(5)} + \mathbf s_i^{(6)}, \label{eq-sn-3}\\
\mathbf s_{i+1}^{(1)} &= \mathbf s_i^{(0)} + \mathbf s_i^{(1)} + \mathbf s_i^{(2)} + \mathbf s_i^{(3)} + \mathbf s_i^{(4)} + \mathbf s_i^{(5)},\label{eq-sn-4} \\
\mathbf s_{i+1}^{(2)} &= \mathbf s_i^{(0)} + \mathbf s_i^{(1)} + \mathbf s_i^{(2)} + \mathbf s_i^{(3)} + \mathbf s_i^{(4)} + \mathbf s_i^{(5)} + \mathbf s_i^{(6)},\label{eq-sn-5}\\
\mathbf s_{i+1}^{(3)} &= \mathbf s_i^{(0)} + \mathbf s_i^{(1)} + \mathbf s_i^{(2)} + \mathbf s_i^{(3)} \label{eq-sn-6}\\
\mathbf s_{i+1}^{(4)} &= \mathbf s_i^{(0)} + \mathbf s_i^{(1)} + \mathbf s_i^{(2)} + \mathbf s_i^{(3)} + \mathbf s_i^{(4)} + \mathbf s_i^{(5)} + \mathbf s_i^{(6)}, \label{eq-sn-7}\\
\mathbf s_{i+1}^{(5)} &= \mathbf s_i^{(0)} + \mathbf s_i^{(1)} + \mathbf s_i^{(2)} + \mathbf s_i^{(3)} + \mathbf s_i^{(4)} + \mathbf s_i^{(5)}, \label{eq-sn-8} \\
\mathbf s_{i+1}^{(6)} &= \mathbf s_i^{(0)} + \mathbf s_i^{(1)} + \mathbf s_i^{(2)} + \mathbf s_i^{(3)} + \mathbf s_i^{(4)} + \mathbf s_i^{(5)} + \mathbf s_i^{(6)}.\label{eq-sn-9}
\end{align}
Let $W_{-1}= \{\mathbf 0\}$ and for each $i\ge 0$, define the set $W_i$ as follows:
\begin{equation}\label{eqn:wiwi-1}
W_i = \{ \mathbf x + \sum_{k=0}^j\mathbf s_i^{(k)} \,|\, \mathbf x\in \bigcup_{k=-1}^{i-1}W_k, 0\le j\le 6\}.
\end{equation}
Then we have the following result.
\begin{thm}\label{thm:main-2}
The set of all Rauzy points is the union all $W_i$.
\end{thm}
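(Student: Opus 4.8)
The plan is to recast both $\bigcup_i W_i$ and the set of Rauzy points as finite sums of the vectors $\mathbf b_n$, and then prove the two inclusions. First I would record the partial sums $T_i^{(j)}=\sum_{k=0}^{j}\mathbf s_i^{(k)}$ of the vectors in \eqref{eq-sn-3}--\eqref{eq-sn-9}. Using those recursions together with the abelianized form $\mathbf b_{n+3}=\mathbf b_n+\mathbf b_{n+1}+\mathbf b_{n+2}$ of \eqref{eqn:an-recursive}, an induction on $i$ shows that $\sum_{k=0}^{6}\mathbf s_i^{(k)}=\mathbf b_{3i+3}$ and, more precisely, that $T_i^{(0)},\dots,T_i^{(6)}$ are exactly the seven nonempty subset sums of the triple $\{\mathbf b_{3i},\mathbf b_{3i+1},\mathbf b_{3i+2}\}$. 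Feeding this into \eqref{eqn:wiwi-1} and unrolling the nested definition of the $W_i$ down the descending layers, I obtain that $\bigcup_i W_i$ is exactly $\mathcal B:=\{\sum_{n\in A}\mathbf b_n : A\subseteq\mathbb Z_{\ge 0}\text{ finite}\}$: each layer contributes one index-triple, within a triple all subsets occur, and a layer may be skipped (the empty subset), so every finite $A$ arises.

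For the inclusion that every Rauzy point lies in $\bigcup_i W_i$, I would prove $R_{3i+3}\subseteq\bigcup_{k\le i}W_k$ by induction on $i$. Iterating \eqref{eqn:an-recursive} factors the tribonacci word as $[\mathbf a_{3i+3}]=B_0B_1\cdots B_6$, where $B_0=B_2=B_4=B_6=[\mathbf a_{3i}]$, $B_1=B_5=[\mathbf a_{3i-1}][\mathbf a_{3i-2}]$, $B_3=[\mathbf a_{3i-1}]$, and where $T_i^{(j)}$ is the Rauzy point of the prefix $B_0\cdots B_j$. Each block $B_{j+1}$ is itself a prefix of $[\mathbf a_{3i}]$, so any prefix of $[\mathbf a_{3i+3}]$ of length $\ell$ with $L(\mathbf b_{3i})<\ell\le L(\mathbf b_{3i+3})$ factors as $B_0\cdots B_j$ followed by a prefix of $B_{j+1}$; its Rauzy point is therefore $T_i^{(j)}+\mathbf x$ with $\mathbf x\in R_{3i}$, and the inductive hypothesis $R_{3i}\subseteq\bigcup_{k<i}W_k$ puts it in $W_i$ by \eqref{eqn:wiwi-1}. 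Prefixes of length at most $L(\mathbf b_{3i})$ are already in $\bigcup_{k<i}W_k$. Since the sets $R_{3i+3}$ from \eqref{eqn:rn} exhaust all Rauzy points, this direction follows.

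For the reverse inclusion, by the first step it suffices to show that every $\sum_{n\in A}\mathbf b_n$ is Rauzy. The key tool is a strengthened version of the prepending lemma used for Theorem \ref{thm:main-1}: if $\mathbf x$ is Rauzy with $L(\mathbf x)\le L(\mathbf b_n)$, then $\mathbf x+\mathbf b_n$ is Rauzy. This relaxes the hypothesis \eqref{eqn:lwl} from $L(\mathbf b_{n-1})+L(\mathbf b_{n-2})$ up to the full $L(\mathbf b_n)$, and I would prove it by checking that $[\mathbf a_n][\mathbf w]$ is a prefix of $[\mathbf a_{n+2}]$, where $[\mathbf w]$ is the word of $\mathbf x$, using that the continuation of $[\mathbf a_{n+2}]$ after its leading $[\mathbf a_n]$ agrees with $[\mathbf a_n]$ on its first $L(\mathbf b_n)$ letters. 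With this in hand I would show that $\sum_{n\in A}\mathbf v_n$ equals the letter vector of the prefix of length $\ell_A:=\sum_{n\in A}L(\mathbf b_n)$, by a double induction: primary induction on $\ell_A$; when $A$ has no three consecutive indices its greedy leading term is $\max A$ and the strengthened lemma peels it off (lowering $\ell_A$); when $A$ has a run of at least three consecutive indices I carry the top three of a maximal run by $\mathbf b_{q-2}+\mathbf b_{q-1}+\mathbf b_q=\mathbf b_{q+1}$, which leaves $\sum_{n\in A}\mathbf b_n$ and $\ell_A$ unchanged while strictly decreasing $|A|$. This realizes $\sum_{n\in A}\mathbf b_n$ as the Rauzy point of an honest prefix.

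The main obstacle is precisely this last step. In contrast with the completeness direction, the lower-layer points $\mathbf x\in\bigcup_{k<i}W_k$ need not be short: the set $A=\{0,1,\dots,3i-1\}$ is realized and has length $\sum_{n=0}^{3i-1}L(\mathbf b_n)$, which exceeds $L(\mathbf b_{3i})$ once $i\ge 2$, so one cannot attach the top triple $\{3i,3i+1,3i+2\}$ by a single use of the prepending lemma. The remedy is to normalize an arbitrary index set to its canonical, no-three-consecutive tribonacci representation while preserving the abelianized vector; making this reduction well defined and terminating---in particular handling the collisions created when a carry lands on an already occupied index---is the technical heart of the argument, and it is exactly what forces the strengthened prepending lemma.
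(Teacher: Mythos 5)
Your argument genuinely departs from the paper's after its first step. The identities you derive for the partial sums $T_i^{(j)}$ are exactly the paper's Equations \eqref{eq-sn-sum-bn-1}--\eqref{eq-sn-sum-bn-7}, proved by the same induction; but the paper stops essentially there, observing that $T_i^{(0)},\dots,T_i^{(5)}$ are the six A-layer offsets of Equations \eqref{eqn:points-vi-1}--\eqref{eqn:points-vi-6} while $T_i^{(6)}=\mathbf b_{i+1}^{(0)}$ is the type-$0$ offset one level up, so that the sandwich \eqref{eqn:viwi} reduces Theorem \ref{thm:main-2} to Theorem \ref{thm:main-1}. You instead reprove both inclusions from scratch: your factorization $[\mathbf a_{3i+3}]=B_0\cdots B_6$ is a rigorous form of the trimming pattern \eqref{eqn:a6-patt} and gives completeness directly, and your identification of $\bigcup_i W_i$ with the subset sums $\sum_{n\in A}\mathbf b_n$, followed by tribonacci carrying, replaces the appeal to Theorem \ref{thm:main-1} entirely. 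This is longer but more self-contained, and it honestly confronts bookkeeping the paper glosses over: the $W$-bases contain seventh children (e.g.\ $\mathbf b_{3i}=T_{i-1}^{(6)}\in W_{i-1}$, whence $2\mathbf b_{3i}\in W_i$), points not literally covered by the stated sandwich \eqref{eqn:viwi}; your carry identity, in the form $2\mathbf b_{3i}=\mathbf b_{3i-3}+\mathbf b_{3i+1}$, is precisely what absorbs them.

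One step is wrong as stated: the strengthened prepending lemma fails for $n\le 2$ at the boundary $L(\mathbf x)=L(\mathbf b_n)$. Take $n=2$, $\mathbf x=\mathbf b_2$: the point $2\mathbf b_2$ has abelianization $(4,2,2)$, total length $8$, but the unique prefix of length $8$ is $[01020100]$ with vector $(5,2,1)$; since $\pi$ is injective on $\mathbb Z^3$ (the kernel direction $(\lambda^2,\lambda,1)$ is irrational), $2\mathbf b_2$ is not Rauzy. Your justification needs $[\mathbf a_{n-3}]$: writing $[\mathbf a_{n+2}]=[\mathbf a_n]\,[\mathbf a_{n-1}][\mathbf a_{n-2}][\mathbf a_n][\mathbf a_{n-1}]$ and using that $[\mathbf a_n]$ begins with $[\mathbf a_{n-3}]$, one gets that $[\mathbf a_n][\mathbf a_n]$ is a prefix of $[\mathbf a_{n+2}]$ only for $n\ge 3$. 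Fortunately your own normalization makes the full strength unnecessary: once $A$ has no three consecutive indices and $q=\max A$, the remainder $A'=A\setminus\{q\}$ omits at least one of $q-1,q-2$, and a telescoping estimate with $t_n=L(\mathbf b_n)$ gives $\ell_{A'}\le t_{q-1}+t_{q-2}$, so the paper's original hypothesis \eqref{eqn:lwl} already applies (small $q$ checked by hand). Conversely, the difficulty you flag as the ``technical heart'' is not one: carrying the top three indices $\{q-2,q-1,q\}$ of a \emph{maximal} run can never collide, since maximality means $q+1\notin A$, and termination is immediate because $|A|$ drops by two at each carry while $\ell_A$ and $\sum_{n\in A}\mathbf b_n$ are preserved. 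With the lemma's hypothesis corrected (strict inequality, or $n\ge3$) and a direct check of the degenerate base case $i=0$ of your block factorization (where $[\mathbf a_{-1}],[\mathbf a_{-2}]$ are undefined but $[\mathbf a_3]=[0102010]$ can be inspected), your proof goes through.
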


\begin{figure}
    \centering
    \begin{subfigure}[b]{0.475\textwidth}
        \centering
        \includegraphics[scale=.3]{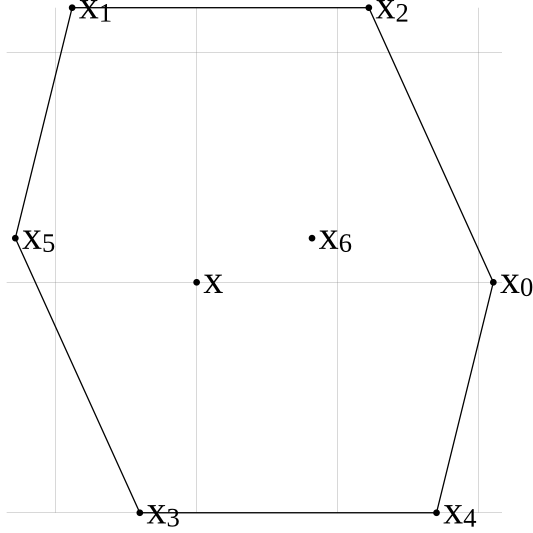}
        \caption{Rauzy points in the B-layers at the levels up to $1$.}
        \label{fig:rauzy-1-type-2}
    \end{subfigure}
    \hfill
    \begin{subfigure}[b]{0.475\textwidth}
        \centering
        \includegraphics[scale=.2]{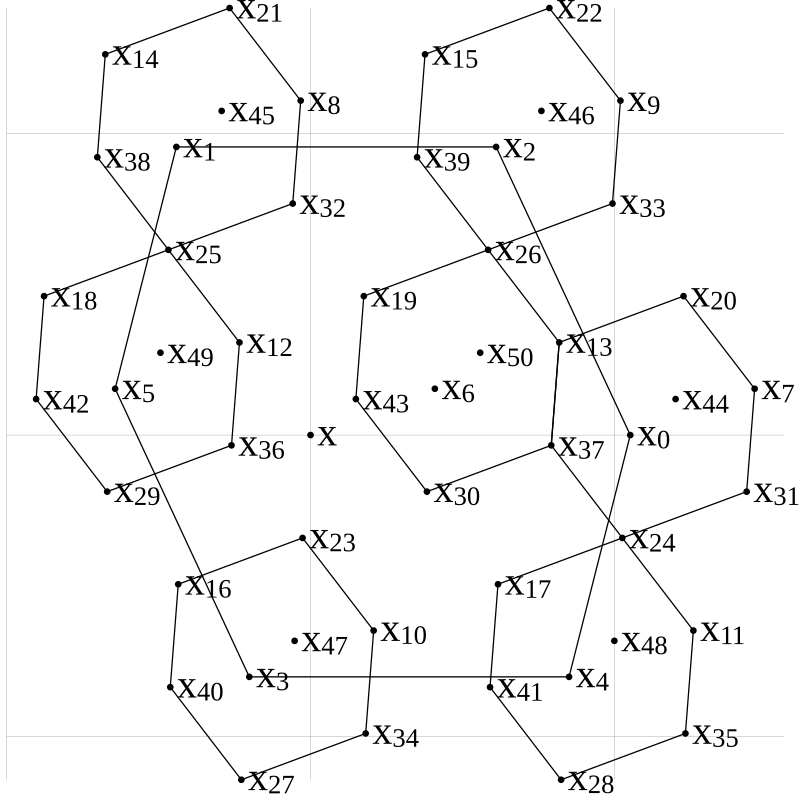}
        \caption{Rauzy points in the B-layer at the levels up to $2$.}
        \label{fig:rauzy-2-type-2}
    \end{subfigure}
    \caption{Rauzy points generated by sets $W_i$.}
    \label{fig:rauzy-boundary-seq}
\end{figure}

Before we procede to the proof, let us visualize how Theorem \ref{thm:main-2}
contruct Rauzy fractal and its boundary.
To help understanding, we will use the terminologies that we defined in previous sections
here in similar ways.
The set $W_i$ is the \textbf{$i$-layer of type B},
the point $\mathbf x + \sum\mathbf s_i^{(k)}$ is the \textbf{child} of $\mathbf x$,
and the hexagon bounded by the outer six children is the \textbf{cell} of $\mathbf x$.
Figure \ref{fig:rauzy-1-type-2} and \ref{fig:rauzy-2-type-2}
shows the points in the $1$-layer and $2$-layer respectively.
Unlike to the layers of type A,
each point in the $(i-1)$-layer of type A is the parent to the \textit{seven} children points in the $i$-layer of type A.

The morphic pattern in Equations \eqref{eq-sn-1} - \eqref{eq-sn-9} is obtained by the following observation.
The tribonacci word $[\mathbf a_6]$ is obtained from concatenating copies of $[\mathbf a_3] = [0102010]$ 
with trimmings at the right end:
\begin{equation}\label{eqn:a6-patt}
	[\mathbf a_6] = [\underbrace{0102010}_{0\textrm{ trim}}] \Big| [\underbrace{010201}_{1\textrm{ trim}}] \Big| [\underbrace{0102010}_{0\textrm{ trim}}] \Big| [\underbrace{0102}_{\underline{3}\textrm{ trims}}] \Big| [\underbrace{0102010}_{0\textrm{ trim}}] \Big| [\underbrace{010201}_{1\textrm{ trim}}] \Big| [\underbrace{0102010}_{0\textrm{ trim}}]
\end{equation}
The numbers of trims follows the pattern of letters in $[\mathbf a_3]$,
except for the case of letter $2$: we trim $3$ letters instead of $2$.
That means, we follow the sequence $0102010$ from left to right
and trim the last $0$, $1$, or $3$ characters from $[\mathbf a_3]$ to obtain substrings 
and concatenating them.
The tribonacci $[\mathbf a_9]$ satisfies the same pattern.
Each substring in $[\mathbf a_6]$ separated by $|$ in Equation \eqref{eqn:a6-patt} 
become a new unit for the next trimming.
For example, the first substring of $[\mathbf a_9]$ 
is obtained by trimming the last $[0102010]$ from $[\mathbf a_6]$ (not $[0]$).

The exception at the letter $2$ is essential to obtain the Rauzy fractal.
Because of the exception at $2$, we will call this pattern as
\textit{pseudo self-replicating}.
Interestingly, removing this exception seems to give discrete tiling of $\mathbb R^2$ with a compact domain.
Tilings obtained by fully self-replicating pattern 
will be explored in the next section.

Theorem \ref{thm:main-2} implies that
we can produce Rauzy points by the simple pattern from the sequence $0102010$
using the three vectors $\mathbf u_0$, $\mathbf u_1$, $\mathbf u_2$.
The construction B also gives the boundary of Rauzy fractal.
In fact, it gives the same figures as in Figure \ref{fig:rauzy-boundary}.
We take only the boundary points at each B-layers and produce children points at the next layer.
The only difference between the A-layers and the B-layers is that the
B-layers contains children that are lie interior to the cells of their parent. 
For example the point $\mathbf x_6$ in Figure \ref{fig:rauzy-1-type-2}
lies at the level $1$ for the A-layer, but it appears at the level $0$ for the B-layer.

The upshot for using the construction B is that the ingredients to produce
Rauzy fractal and its boundary are essentially the three vectors $\mathbf u_0,\mathbf u_1,\mathbf u_2$
and the self-replicating sequence $0102010$.
Although we need an exception at the letter $2$ to draw the Rauzy fractal,
it is much simpler than having six rules in Equations \eqref{eqn:points-vi-1} - \eqref{eqn:points-vi-6}.
Moreover, the construction A requires to prepare the Rauzy points $\mathbf b_i^{(j)}$,
where as the construction B requires none.

Now let us prove Theorem \ref{thm:main-2}. We will show that the following holds for all $i\ge 0$:
\begin{equation}\label{eqn:viwi}
	V_i \subset W_i \subset V_{i+1}^{(0)}.
\end{equation}
In particular, we will show that the following equalities hold for all $i\ge0$:
\begin{align}
\mathbf s_i^{(0)} = \mathbf s_i^{(2)} = \mathbf s_i^{(4)} = \mathbf s_i^{(6)} &= \mathbf b_i^{(0)},\label{eqn-sn-bn-1} \\
\mathbf s_i^{(1)} = \mathbf s_i^{(5)} &= \mathbf b_i^{(1)} - \mathbf b_i^{(0)},\label{eqn-sn-bn-2} \\
\mathbf s_i^{(3)} &= \mathbf b_i^{(2)} - \mathbf b_i^{(0)} - \mathbf b_i^{(1)}. \label{eqn-sn-bn-3}
\end{align}
From Equations \eqref{eq-sn-1} - \eqref{eq-sn-2}, we have
\begin{alignat}{2}
\mathbf s_0^{(0)} = \mathbf s_0^{(2)} = \mathbf s_0^{(4)} = \mathbf s_0^{(6)} &= \mathbf u_0 &&= \mathbf b_0^{(0)},\label{eqn-s0-b0-1} \\
\mathbf s_0^{(1)} = \mathbf s_0^{(5)} &= \mathbf u_1 &&= \mathbf b_0^{(1)} - \mathbf b_0^{(0)}, \label{eqn-s0-b0-2} \\
\mathbf s_0^{(3)} &= \mathbf u_2 &&= \mathbf b_0^{(2)} - \mathbf b_0^{(0)} - \mathbf b_0^{(1)}. \label{eqn-s0-b0-3}
\end{alignat}
Suppose that Equations \eqref{eqn-sn-bn-1} - \eqref{eqn-sn-bn-3} hold for all $i=k$. 
Then from Equations \eqref{eq-sn-3} - \eqref{eq-sn-9}, we have
\begin{alignat}{2}\label{eq-sn1-bn}
\mathbf s_{k+1}^{(0)} = \mathbf s_{k+1}^{(2)} = \mathbf s_{k+1}^{(4)} + \mathbf s_{k+1}^{(6)}
&= \mathbf b_k^{(0)} + \mathbf b_k^{(1)} + \mathbf b_k^{(2)} &&= \mathbf b_{k+1}^{(0)},\nonumber \\
\mathbf s_{k+1}^{(1)} = \mathbf s_{k+1}^{(5)}
&= \mathbf b_k^{(1)} + \mathbf b_k^{(2)} &&= \mathbf b_{k+1}^{(0)} - \mathbf b_{k+1}^{(0)}, \nonumber\\
\mathbf s_{k+1}^{(3)} &= \mathbf b_k^{(2)} &&= \mathbf b_{k+1}^{(2)} - \mathbf b_{k+1}^{(0)} - \mathbf b_{k+1}^{(1)}
\end{alignat}
Therefore, we prove Equations \eqref{eqn-sn-bn-1} - \eqref{eqn-sn-bn-3}.
As a consequence, we have
\begin{align}
\mathbf s_i^{(0)} &= \mathbf b_i^{(0)}, \label{eq-sn-sum-bn-1} \\
\mathbf s_i^{(0)} + \mathbf s_i^{(1)} &= \mathbf b_i^{(1)}, \label{eq-sn-sum-bn-2} \\
\mathbf s_i^{(0)} + \mathbf s_i^{(1)} + \mathbf s_i^{(2)} &= \mathbf b_i^{(0)} + \mathbf b_i^{(1)}, \label{eq-sn-sum-bn-3} \\
\mathbf s_i^{(0)} + \mathbf s_i^{(1)} + \mathbf s_i^{(2)} + \mathbf s_i^{(3)} &= \mathbf b_i^{(2)}, \label{eq-sn-sum-bn-4} \\
\mathbf s_i^{(0)} + \mathbf s_i^{(1)} + \mathbf s_i^{(2)} + \mathbf s_i^{(3)} + \mathbf s_i^{(4)} &= \mathbf b_i^{(0)} + \mathbf b_i^{(2)}, \label{eq-sn-sum-bn-5} \\
\mathbf s_i^{(0)} + \mathbf s_i^{(1)} + \mathbf s_i^{(2)} + \mathbf s_i^{(3)} + \mathbf s_i^{(4)} + \mathbf s_i^{(5)} &= \mathbf b_i^{(1)} + \mathbf b_i^{(2)}, \label{eq-sn-sum-bn-6} \\
\mathbf s_i^{(0)} + \mathbf s_i^{(1)} + \mathbf s_i^{(2)} + \mathbf s_i^{(3)} + \mathbf s_i^{(4)} + \mathbf s_i^{(5)} + \mathbf s_i^{(6)} &= \mathbf b_i^{(0)} + \mathbf b_i^{(1)} + \mathbf b_i^{(2)} = \mathbf b_{i+1}^{(0)}.\label{eq-sn-sum-bn-7}
\end{align}
This proves Equation \eqref{eqn:viwi} and thus Theorem \ref{thm:main-2}.

\section{Self-replicating words and discrete tilings}\label{sec_discrete_tiling_plane} 

Let $[\mathbf w] = [w_0\cdots w_l]$ be a word on $d$-letters.
Let us denote $[\mathbf w_0] = [\mathbf w]$ and for $0\le i \le l$,
\begin{equation}
	[\mathbf w_0^{(i)}] = [w_i].
\end{equation}
For $n\ge 0$, the \textbf{$n$-th replicate} of $[\mathbf w]$,
denoted by  $[\mathbf w_n]$, is the word defined by
\begin{equation}
	[\mathbf w_n] = [\mathbf w_n^{(0)}]\cdots[\mathbf w_n^{(l)}], 
\end{equation}
where for $0\le i \le l$,
\begin{equation}
	[\mathbf w_n^{(i)}] = [\mathbf w_{n-1}^{(0)}]\cdots [\mathbf w_{n-1}^{(l-w_i)}].
\end{equation}

\begin{defn}
Let $\mathbf v_n$ be the word vector for $\mathbf w_n$.
A word $\mathbf w$ is called \textbf{self-replicating} 
if the limit $\mathbf v_\infty = \displaystyle \lim \mathbf v_n/\Vert\mathbf v_n\Vert$ exists.
\end{defn}

We will define the domains for self-replicating words as follows, 
in a similar way to Pisot domains, but using the ideas from the construction B.
Let $P$ the hyperplane orthogonal to $\mathbf v_\infty$. 
Let $\mathbf r_0,\cdots, \mathbf r_{d-2}$ be the orthonormal basis on $P$ obtained by
Gram--Schmidt process on $\mathbf v_\infty, \mathbf e_0, \cdots,\mathbf e_{d-2}$.
Let $\pi:\mathbb R^d\to P$ be the orthogonal projection,
and $\mathbf u_i$, $i=0,\cdots,n-1$, be the $(d-1)$-dimensional vectors  as follows.
\begin{equation}\label{eqn:ui-self}
	\mathbf u_i = (\pi(\mathbf e_i)\circ \mathbf r_0,\cdots,\pi(\mathbf e_i)\circ\mathbf r_{n-2}).
\end{equation}
We define the vectors $\mathbf s_i^{(j)}$ similar to Equations \eqref{eq-sn-1} - \eqref{eq-sn-9}:
for $j=0,\cdots,l$,
\begin{equation}\label{eq-soj-1}
\mathbf s_0^{(j)} = \mathbf u_j,\quad
\mathbf s_{i+1}^{(j)} = \sum_{k=0}^{l-a_j}\mathbf s_i^{(k)}\textrm{ for } i\ge0.
\end{equation}
Finally, using the same definition of the set $W_i$ in Equation \eqref{eqn:wiwi-1},
define the $(d-1)$-dimensional domain $W$ as 
\begin{equation}\label{eqn:w-self}
	W = \bigcup_{i=-1}^\infty W_i.
\end{equation}

\begin{table}
\centering
\begin{tabular}{|c|c|}
\hline
$\mathbf w$ & $\mathbf v_\infty$ \\ \hhline{|=|=|}
0120 & (0.756, 0.521, 0.397) \\\hline
0102 & (0.850, 0.462, 0.251) \\\hline
0201 & (0.831, 0.259, 0.492) \\\hline
0102010 & (0.861, 0.447, 0.242) \\\hline
1201 & (0.381, 0.717, 0.584) \\\hline
2010 & (0.771, 0.359, 0.526) \\\hline
\end{tabular}
\caption{Self replicating words and their limits}
\label{tab:self-repli}
\end{table}

\begin{figure}
    \centering
    \begin{subfigure}[b]{0.3\textwidth}
    \centering
    \includegraphics[scale=.1]{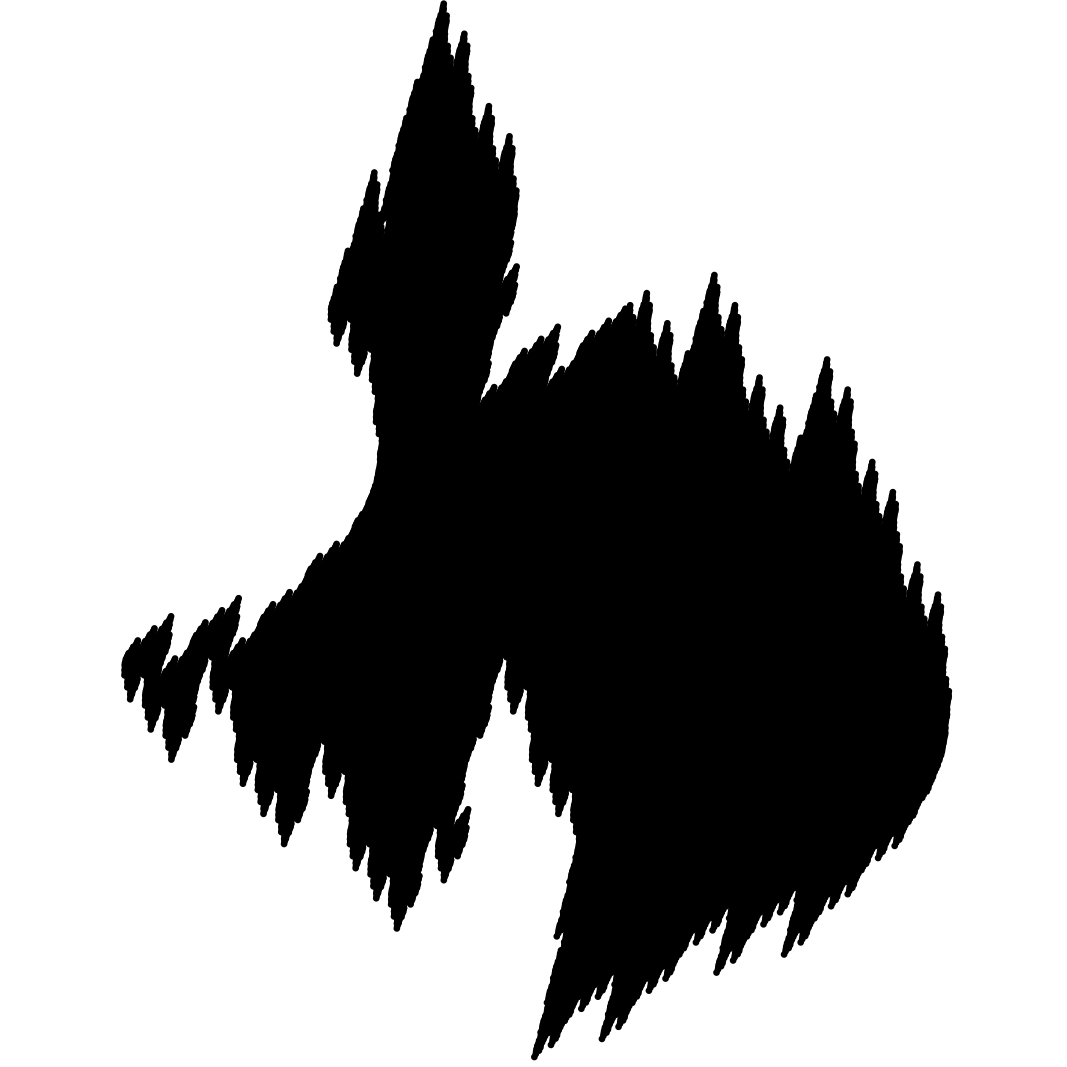}
    \caption{0120 ``The Fish"}
    \label{fig-0120}
    \end{subfigure}
    \begin{subfigure}[b]{0.3\textwidth}
    \centering
    \includegraphics[scale=.1]{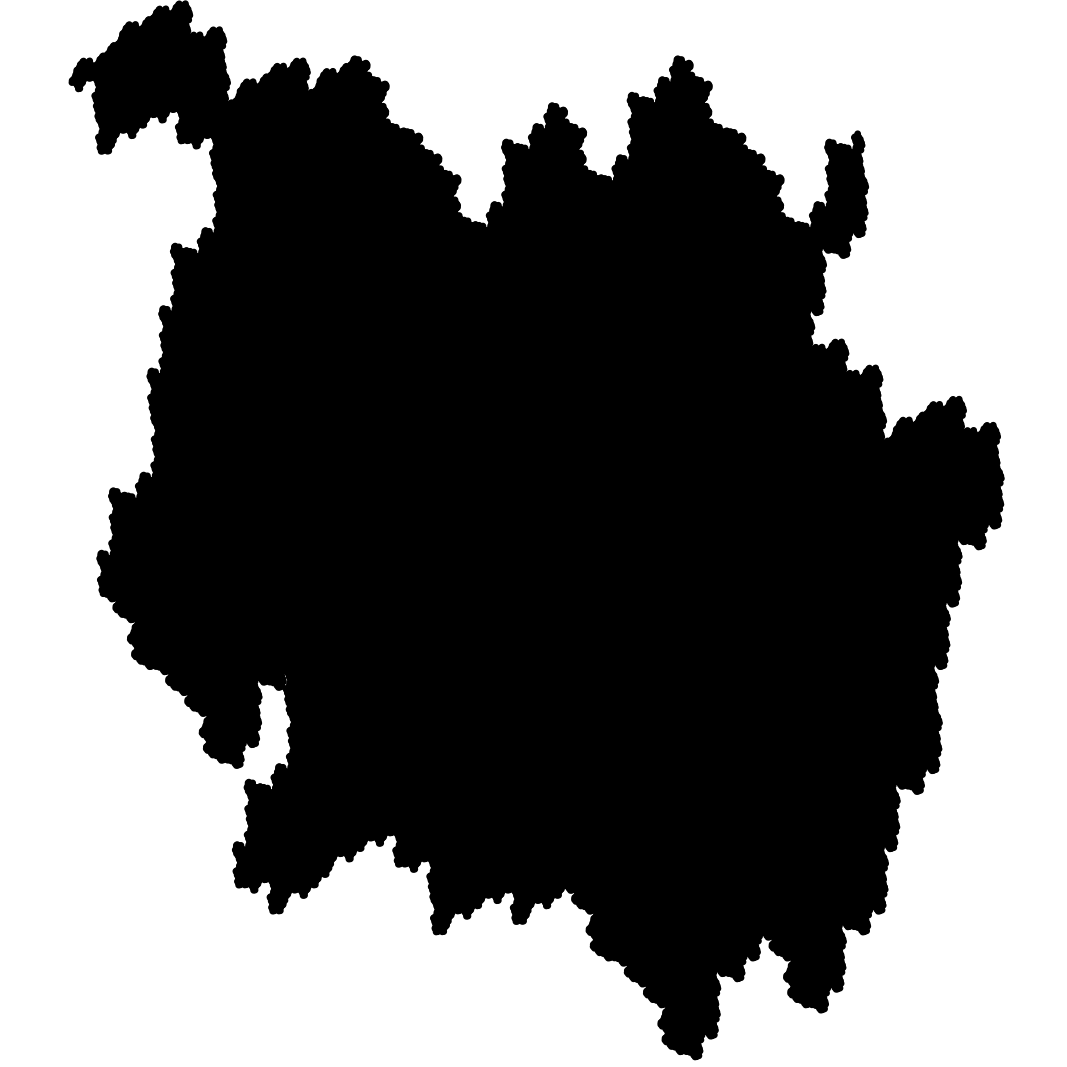}
    \caption{0102}
    \label{fig-0102}
    \end{subfigure}
    \begin{subfigure}[b]{0.3\textwidth}
    \centering
    \includegraphics[scale=.1]{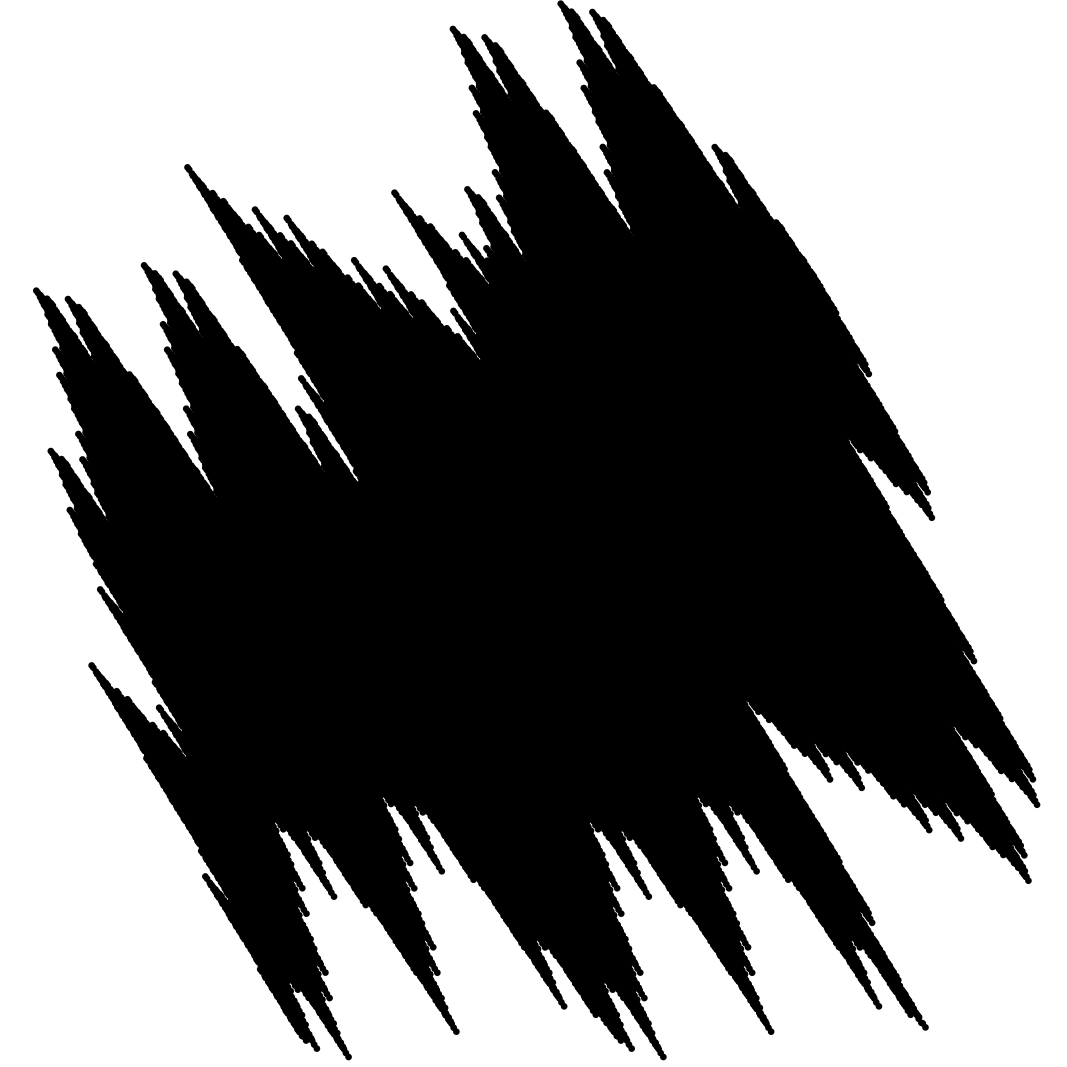}
    \caption{0201}
    \label{fig-0201}
    \end{subfigure}
    \begin{subfigure}[b]{0.3\textwidth}
    \centering
    \includegraphics[scale=.1]{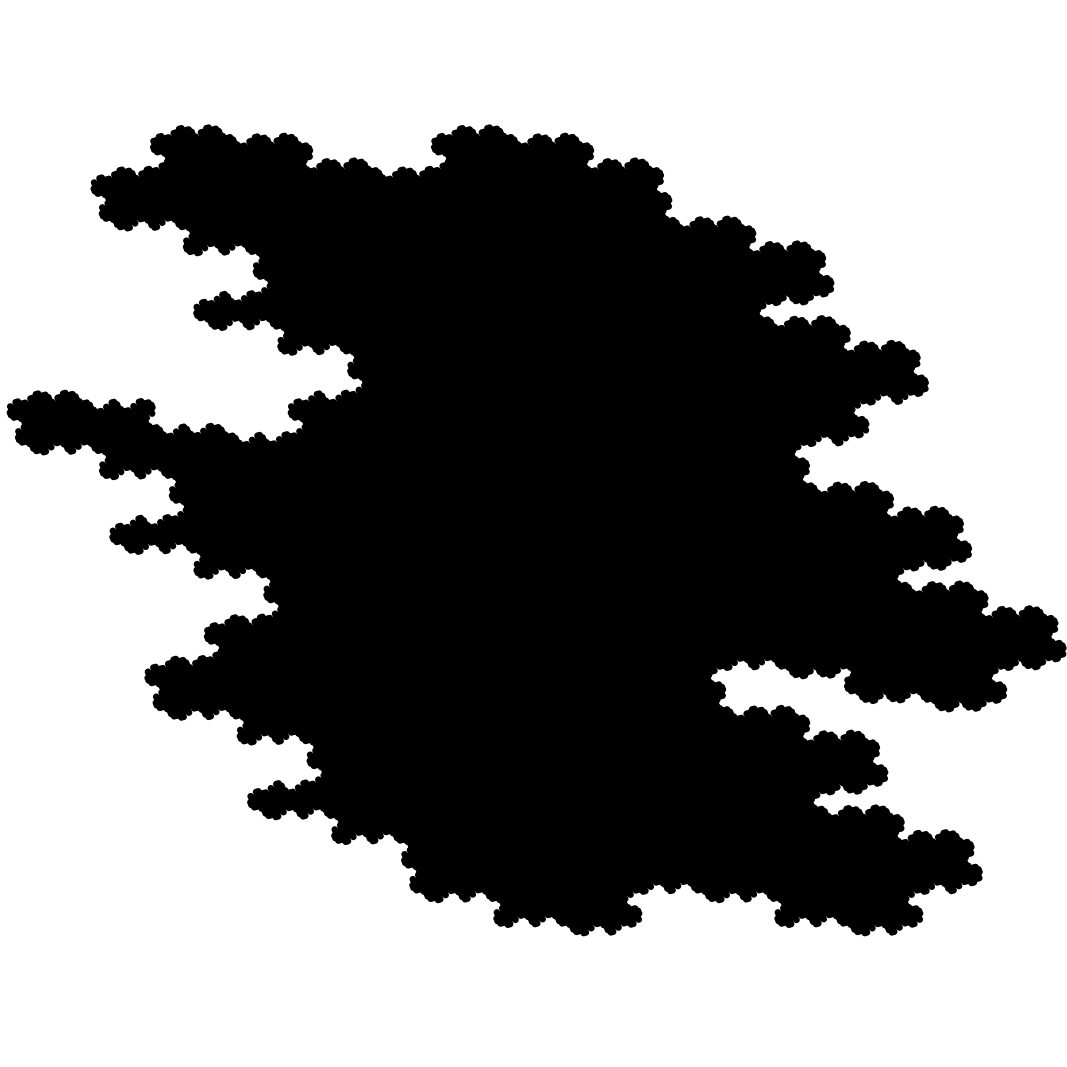}
    \caption{0102010}
    \label{fig-0102010}
    \end{subfigure}
    \begin{subfigure}[b]{0.3\textwidth}
    \centering
    \includegraphics[scale=.1]{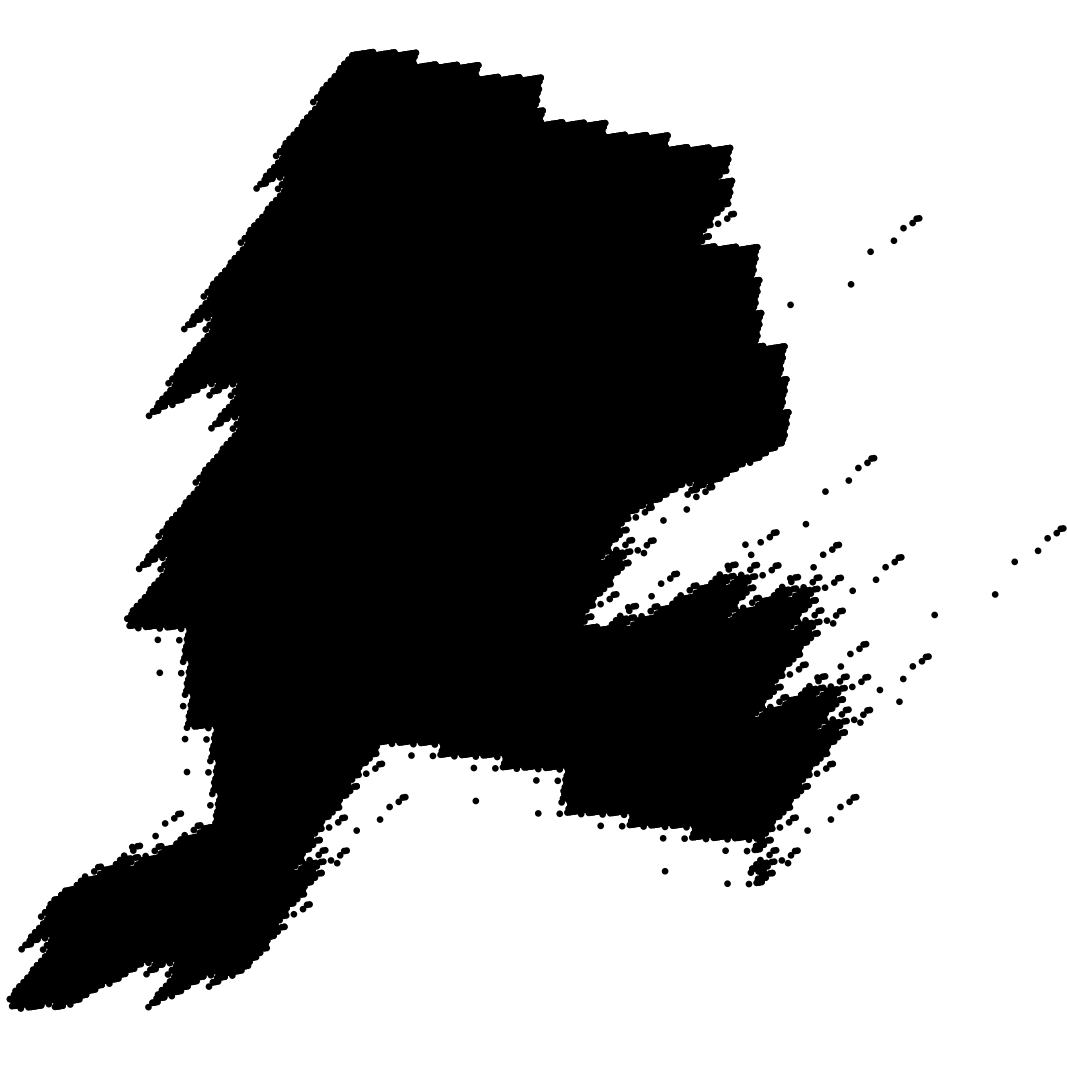}
    \caption{1201 ``The Runner"}
    \label{fig-012}
    \end{subfigure}
    \begin{subfigure}[b]{0.3\textwidth}
    \centering
    \includegraphics[scale=.1]{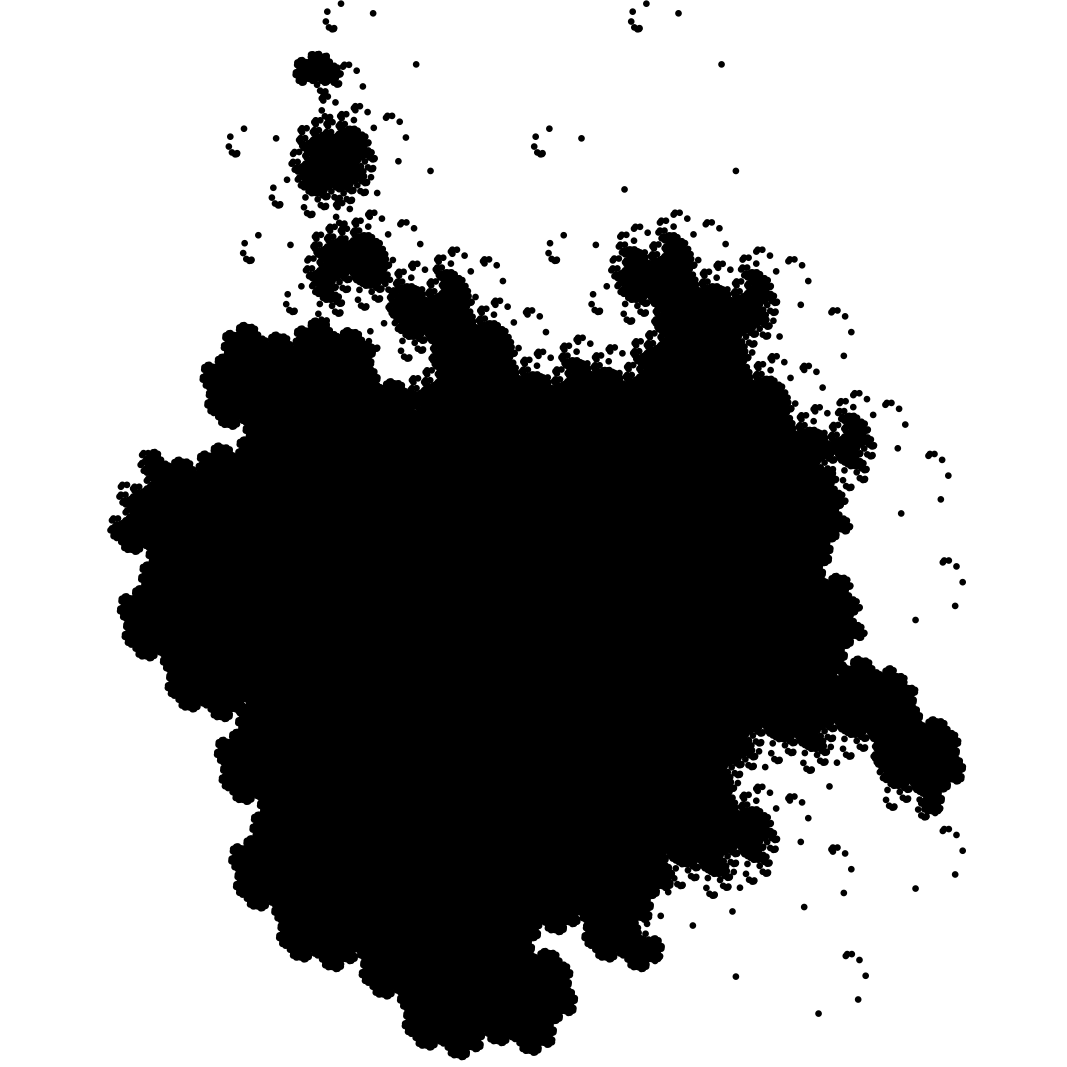}
    \caption{2010}
    \label{fig-2010}
    \end{subfigure}
    \caption{The domains for self-replicating words}
    \label{fig-w-self}
\end{figure}

Table \ref{tab:self-repli} shows some examples of self-replicating words on $3$ letters
and their limits $\mathbf v_\infty$.
Figure \ref{fig-w-self} shows the domain $W$ for each elf-replicating words.
One can see that all domains are compact, and tiles $\mathbb R^2$ discretely.
Once we have $(n-1)$-dimensional orthonormal basis $\mathbf u_i$, $i=0,\cdots,n-2$,
all points in such domain can be obtained by arithmetic summations of vectors as in Equation \eqref{eq-soj-1}.

The domains in Figure \ref{fig-w-self} tiles $\mathbb R^2$.
From $\mathbf u_0, \mathbf u_1, \mathbf u_2$ in Equation \eqref{eqn:ui-self}, the three vectors
\begin{equation}\label{eqn:u01}
\mathbf u_{01} = \mathbf u_0 - \mathbf u_1,\quad \mathbf u_{12} = \mathbf u_1 -\mathbf u_2, \quad \mathbf u_{02} = \mathbf u_0 - \mathbf u_2
\end{equation}
are the vectors that translates the domains into hexagonal tilings.
For example, we can translate ``The Fish" in Figure \ref{fig-0120} by 
linear combinations of these three vectors
$(1.27, -0.48),(-0.34, 1.36), (0.93, 0.88)$.
Figure \ref{fig:0120_tiling} shows the discrete tilings of Figures \ref{fig-0120}, \ref{fig-012}.
At this point, we can conjecture that the following statement is true.

\begin{figure}
\centering
\begin{subfigure}[b]{0.475\textwidth}
\centering
\includegraphics[scale=.15]{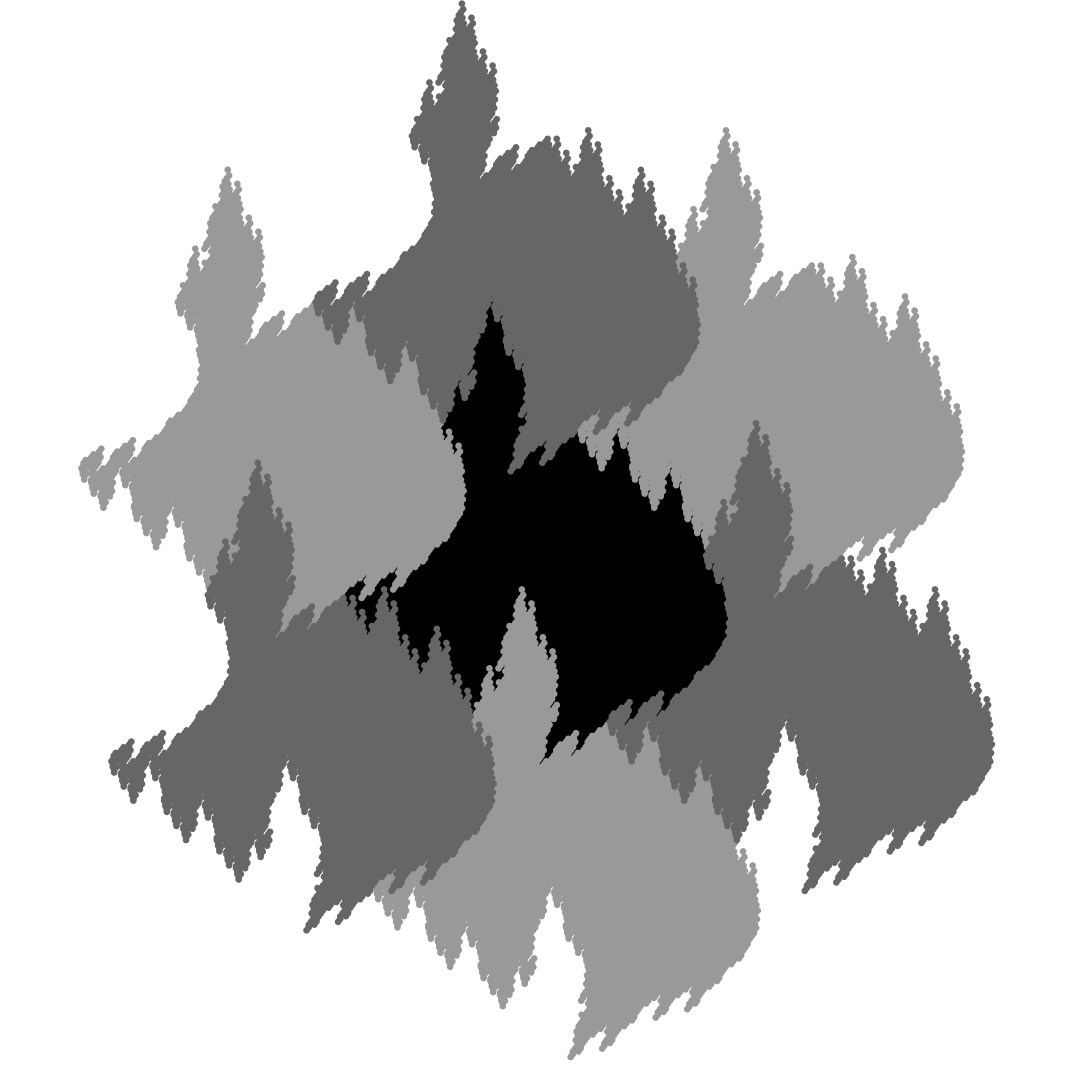}
\caption{The discrete tiling of ``The Fish" }
\label{fig:0120_tiling}
\end{subfigure}
\begin{subfigure}[b]{0.475\textwidth}
\centering
\includegraphics[scale=.15]{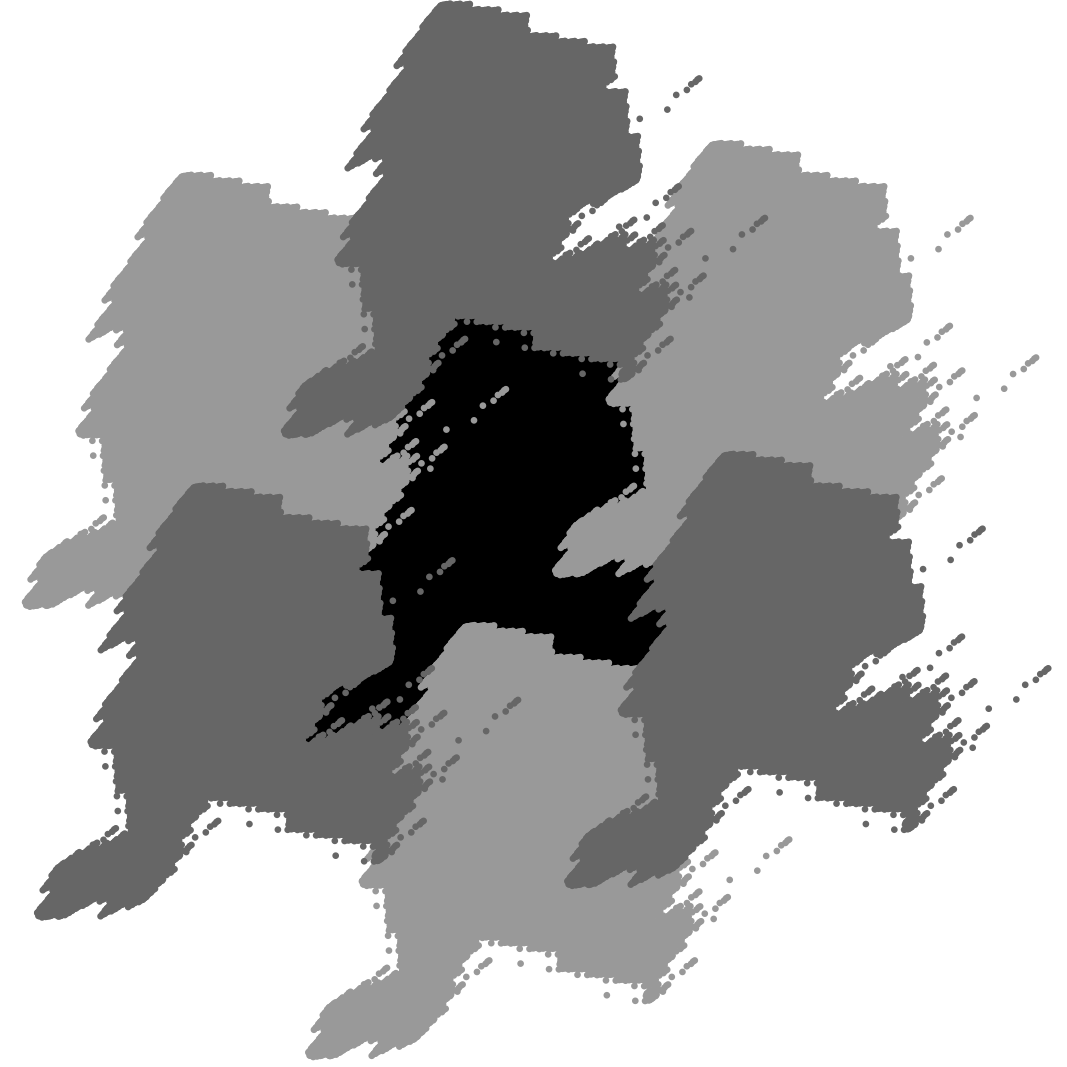}
\caption{The discrete tiling of ``The Runner"}
\label{fig:1201_tiling}
\end{subfigure}
\caption{The discrete tiling of self-replicating domains,}
\label{fig:tiling}
\end{figure}

\begin{conj*}
Let $[\mathbf w]$ be a self-replicating word on three letters. 
Then the domain $W$ defined in Equation \eqref{eqn:w-self} tiles $\mathbb R^2$ discretely
in the following sense: for the vectors $\mathbf u_{01}$, $\mathbf u_{12}$, and $\mathbf u_{02}$
in Equation \eqref{eqn:u01} and for 
$3$-dimensional integral vector $\mathbf c= (c_{01},c_{12}, c_{02})$, define
$W_{\mathbf c} = W + \sum_{i<j}c_{ij}\mathbf u_{ij}$.
Then $W_{\mathbf c_1} \cap W_{\mathbf c_2} = \emptyset$ unless $\mathbf c_1= \mathbf c_2$ and
$\mathbb R^2 = \displaystyle\bigcup_{\mathbf c\in\mathbb Z^3}W_{\mathbf c}$.
\end{conj*}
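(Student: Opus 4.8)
The plan is to recast the conjecture as the statement that a compact fundamental domain tiles $\mathbb{R}^2$ under a single rank-two lattice, after first repairing a redundancy in the formulation. The three translation vectors of \eqref{eqn:u01} are dependent, since $\mathbf{u}_{02}=\mathbf{u}_{01}+\mathbf{u}_{12}$, so the assignment $\mathbf{c}\mapsto\sum_{i<j}c_{ij}\mathbf{u}_{ij}$ is three-to-one onto its image and the clause ``unless $\mathbf{c}_1=\mathbf{c}_2$'' must be replaced by ``unless the two translation vectors coincide.'' Writing $\Lambda=\mathbb{Z}\mathbf{u}_{01}+\mathbb{Z}\mathbf{u}_{12}+\mathbb{Z}\mathbf{u}_{02}$, the real content is that the closure $\overline{W}$ of the point set $W$ in \eqref{eqn:w-self} is a fundamental domain for $\Lambda$: the translates $\overline{W}+\lambda$, $\lambda\in\Lambda$, cover $\mathbb{R}^2$ and overlap only in a set of measure zero.

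First I would fix the arithmetic. Because $\mathbf{v}_\infty$ has irrational direction, no nonzero integer vector is parallel to it, so $\pi$ is injective on $\mathbb{Z}^3$ and $\Lambda$ corresponds to the root lattice $\{(a,b,c)\in\mathbb{Z}^3:a+b+c=0\}$. Hence the level $N(\pi(a,b,c))=a+b+c$ descends to an isomorphism $\pi(\mathbb{Z}^3)/\Lambda\cong\mathbb{Z}$, and for a word vector the level is exactly the word length. By the same inductive bookkeeping used to prove Theorems \ref{thm:main-1} and \ref{thm:main-2}, one expects the points of $W$ to be precisely the prefix projections $\pi(\mathbf{v}_l)$, $l\ge 0$, of the infinite self-replicating word, one for each length $l$. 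Since distinct lengths lie in distinct $\Lambda$-cosets, two distinct points of $W$ never differ by an element of $\Lambda$; this already disposes of the discrete packing statement, so the substantive work concerns the filled domain $\overline{W}$.

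For covering I would pass to the torus $T=\mathbb{R}^2/\Lambda$. The key simplification is that $\mathbf{u}_0\equiv\mathbf{u}_1\equiv\mathbf{u}_2\pmod{\Lambda}$, so $\pi(\mathbf{v}_l)=\sum_{k<l}\mathbf{u}_{w_k}\equiv l\bar{\mathbf{u}}\pmod{\Lambda}$ for the common class $\bar{\mathbf{u}}=\pi(\mathbf{e}_0)+\Lambda$. Thus the image of $W$ in $T$ is the single cyclic orbit $\{l\bar{\mathbf{u}}:l\ge 0\}$, which is dense exactly when the coordinates of $\mathbf{v}_\infty$ are $\mathbb{Q}$-linearly independent (as they are for a Pisot eigenvector). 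Since $\overline{W}$ is compact, its image in $T$ is closed and contains this dense orbit, hence equals $T$; this gives $\overline{W}+\Lambda=\mathbb{R}^2$.

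The hard part will be ruling out overlaps of positive measure, that is, showing the quotient map $\overline{W}\to T$ is injective almost everywhere. Equivalently one must show $\mathrm{meas}(\overline{W})$ equals the covolume of $\Lambda$, after which surjectivity forces a.e. injectivity; but computing this measure exactly, via the self-affine set equation that the replication imposes on $\overline{W}$ (realizing it as the attractor of a graph-directed system contracted by the action of the Pisot number on the contracting plane) and verifying that its boundary is null and the pieces satisfy an open-set condition, is precisely the tiling-versus-multiple-covering dichotomy. This is a coincidence (or super-coincidence) condition of the kind known for the Tribonacci word but open in general, and I expect that the definition of self-replicating, which only requires $\mathbf{v}_\infty$ to exist, is too weak to force it; the conjecture should most plausibly be proved under the additional hypothesis that the associated expansion is Pisot, with the coincidence condition supplying the missing almost-everywhere injectivity. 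Isolating and verifying that condition for all three-letter self-replicating words is the crux of any complete proof.
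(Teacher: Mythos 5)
You should first note a mismatch of genre: the statement you were asked to prove is explicitly a \emph{conjecture} in the paper --- the authors offer no proof at all, only the numerical/pictorial evidence of Figures \ref{fig-w-self} and \ref{fig:tiling} --- so there is no proof of record to compare against, and your proposal, which candidly stops short of a complete argument, is in that sense appropriately calibrated. What you do establish is genuinely more than the paper contains. Your repair of the statement is correct and necessary: since $\mathbf u_{02}=\mathbf u_{01}+\mathbf u_{12}$, the map $\mathbf c\mapsto\sum_{i<j}c_{ij}\mathbf u_{ij}$ is not injective, so ``$W_{\mathbf c_1}\cap W_{\mathbf c_2}=\emptyset$ unless $\mathbf c_1=\mathbf c_2$'' is literally false as written (take $\mathbf c_1=(1,1,0)$, $\mathbf c_2=(0,0,1)$); and since $W$ in \eqref{eqn:w-self} is a countable point set, the covering clause can only be meant for its closure. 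Your packing argument via the level homomorphism $\pi(\mathbb Z^3)/\Lambda\cong\mathbb Z$ and your covering argument (image of the compact $\overline W$ in the torus is closed and contains the orbit $\{l\bar{\mathbf u}\}$, hence is everything) are both sound \emph{under the hypotheses you invoke}, and they correctly isolate the measure-theoretic non-overlap as the residual difficulty, which is indeed of Pisot-conjecture/coincidence type --- exactly the circle of problems the paper's introduction cites.

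That said, two of your intermediate steps are themselves unproved and would need real work, and you should not present them as routine. First, the claim that the points of $W$ are ``precisely the prefix projections $\pi(\mathbf v_l)$, one for each length $l$'' is the analogue of Theorem \ref{thm:main-2} for general self-replicating words, and the paper's own proof of that theorem does not transfer: Theorem \ref{thm:main-2} concerns the \emph{pseudo}-self-replicating pattern, whose exceptional trim ($3$ rather than $2$ at the letter $2$ in \eqref{eqn:a6-patt}) the authors stress is essential; for the fully self-replicating rule \eqref{eq-soj-1} the identification of $W$ with a set of prefix projections of a single infinite word, with every length realized, is an open bookkeeping claim, and without ``every length realized'' your dense-orbit argument only yields density along a subset of $\{l\bar{\mathbf u}\}$, which does not suffice. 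Second, the paper's definition of self-replicating requires only that $\mathbf v_\infty=\lim\mathbf v_n/\Vert\mathbf v_n\Vert$ exist; neither the irrationality of its direction (needed for your coset/level argument, i.e.\ injectivity of $\pi$ on $\mathbb Z^3$) nor the $\mathbb Q$-linear independence of its coordinates (needed for Kronecker density) follows from this, so both must be added as hypotheses or derived for the specific words in Table \ref{tab:self-repli}. With those caveats made explicit, your proposal stands as a correct partial reduction of the conjecture --- packing of the discrete set, covering by the closure, and a precise localization of what remains --- rather than a proof, which is the honest state of the art for this statement.
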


\section{Conclusion}

The Rauzy fractal attracted mathematical interests for many years, 
and its characteristics have been generalized into many areas of advanced research.
We studied yet another characteristics of the Rauzy fractal with an elementary view point,
and showed that this characteristic can be generalized for creating another 
tiling scheme for the two dimensional Euclidean space.
We expect that our final conjecture can be further generalized to higher dimensions. 

\bibliographystyle{plain}
\bibliography{references}

\begin{thebibliography}{10}

\bibitem{Ak15}
S.~Akiyama, M.~Barge, V.~Berth\'{e}, J.-Y. Lee, and A.~Siegel.
\newblock On the {P}isot substitution conjecture.
\newblock In {\em Mathematics of aperiodic order}, volume 309 of {\em Progr.
  Math.}, pages 33--72. Birkh\"{a}user/Springer, Basel, 2015.

\bibitem{AI00}
Pierre Arnoux and Shunji Ito.
\newblock Pisot substitutions and {R}auzy fractals.
\newblock volume~8, pages 181--207. 2001.
\newblock Journ\'{e}es Montoises d'Informatique Th\'{e}orique
  (Marne-la-Vall\'{e}e, 2000).

\bibitem{Ba18}
Marcy Barge.
\newblock The {P}isot conjecture for {$\beta$}-substitutions.
\newblock {\em Ergodic Theory Dynam. Systems}, 38(2):444--472, 2018.

\bibitem{BM02}
Marcy Barge and Beverly Diamond.
\newblock Coincidence for substitutions of {P}isot type.
\newblock {\em Bull. Soc. Math. France}, 130(4):619--626, 2002.

\bibitem{BK06}
Marcy Barge and Jaroslaw Kwapisz.
\newblock Geometric theory of unimodular {P}isot substitutions.
\newblock {\em Amer. J. Math.}, 128(5):1219--1282, 2006.

\bibitem{BS07}
Valerie Berth\'{e} and Anne Siegel.
\newblock Purely periodic {$\beta$}-expansions in the {P}isot non-unit case.
\newblock {\em J. Number Theory}, 127(2):153--172, 2007.

\bibitem{Pi38}
Charles Pisot.
\newblock La r\'{e}partition modulo 1 et les nombres alg\'{e}briques.
\newblock {\em Ann. Scuola Norm. Super. Pisa Cl. Sci. (2)}, 7(3-4):205--248,
  1938.

\bibitem{Qu10}
Martine Queff\'{e}lec.
\newblock {\em Substitution dynamical systems---spectral analysis}, volume 1294
  of {\em Lecture Notes in Mathematics}.
\newblock Springer-Verlag, Berlin, second edition, 2010.

\bibitem{Ra82}
G.~Rauzy.
\newblock Nombres alg\'{e}briques et substitutions.
\newblock {\em Bull. Soc. Math. France}, 110(2):147--178, 1982.

\bibitem{RA04}
E.~Arthur Robinson, Jr.
\newblock Symbolic dynamics and tilings of {$\Bbb R^d$}.
\newblock In {\em Symbolic dynamics and its applications}, volume~60 of {\em
  Proc. Sympos. Appl. Math.}, pages 81--119. Amer. Math. Soc., Providence, RI,
  2004.

\bibitem{Si04}
Anne Siegel.
\newblock Pure discrete spectrum dynamical system and periodic tiling
  associated with a substitution.
\newblock {\em Ann. Inst. Fourier (Grenoble)}, 54(2):341--381, 2004.

\bibitem{SS02}
V.~F. Sirvent and B.~Solomyak.
\newblock Pure discrete spectrum for one-dimensional substitution systems of
  {P}isot type.
\newblock volume~45, pages 697--710. 2002.
\newblock Dedicated to Robert V. Moody.

\bibitem{So97}
Boris Solomyak.
\newblock Dynamics of self-similar tilings.
\newblock {\em Ergodic Theory Dynam. Systems}, 17(3):695--738, 1997.

\bibitem{To20}
Toufik Za\"{\i}mi.
\newblock On the distribution of powers of a {G}aussian {P}isot number.
\newblock {\em Indag. Math. (N.S.)}, 31(1):177--183, 2020.

\end{thebibliography}
\end{document}